\newtheorem{thm}{Theorem}[section]
\newtheorem{lem}[thm]{Lemma}
\newtheorem{defi}[thm]{Definition}
\newtheorem{prop}[thm]{Proposition}
\newtheorem{rem}[thm]{Remark}
\numberwithin{equation}{section}
\title[Representations of the affine ageing algebra $\widehat{\mathfrak{age}}(1)$]{Representations of the affine ageing algebra $\widehat{\mathfrak{age}}(1)$}
 \author{Huaimin Li}\address{School of Mathematical Sciences, Xiamen University,
 Xiamen, China 361005} \email{lihuaimin1999@qq.com}
  \author{Qing Wang$^1$}\address{School of Mathematical Sciences, Xiamen University,
 Xiamen, China 361005} \email{qingwang@xmu.edu.cn }\thanks{$^1$Supported by
China NSF grants No.12071385 and No.12161141001.}
\subjclass[2020]{17B10 \& 17B65}
\keywords{Affine ageing algebra, restricted module, Verma module, imaginary Verma module, irreducible module}
\begin{document}

\begin{abstract}
In this paper, we investigate the affine ageing algebra $\widehat{\mathfrak{age}}(1)$,
which is a central extension of the loop algebra of the 1-spatial ageing algebra $\mathfrak{age}(1)$.
Certain Verma-type modules
including Verma modules and imaginary Verma modules of $\widehat{\mathfrak{age}}(1)$ are studied.
Particularly, the simplicity of these modules are characterized and their irreducible quotient modules are determined.
We also study the restricted modules of $\widehat{\mathfrak{age}}(1)$
which are also the modules of the affine vertex algebra arising from the 1-spatial ageing algebra $\mathfrak{age}(1)$.
We present certain constructions of simple restricted $\widehat{\mathfrak{age}}(1)$-modules and an explicit such example of simple
restricted module via the Whittaker module of $\widehat{\mathfrak{age}}(1)$ is given.
\end{abstract}

\maketitle
\section{Introduction}
In physics, the ageing algebra is a local dynamical symmetry of many ageing systems,
far from equilibrium, and with a dynamical exponent $z=2$ \cite{H1,PH}.
From the view of mathematics,
the ageing algebra is a common subalgebra of the corresponding Schr\"odinger algebra \cite{K} and conformal Galilei algebra \cite{HP}
which both are non-semisimple Lie algebras appeared in physics.
Representations of Schr\"odinger algebras and conformal Galilei algebras have been extensively studied rencently in \cite{CHS,CY,GG,LLW,LMZ2} etc.
Thus the study of representation theory of ageing algebras (see \cite{BL1,HS,LMZ1,SH} etc.)
is no doubt helpful for the study of  Schr\"odinger algebras and conformal Galilei algebras.
In particular, L\"u, Mazorchuk and Zhao \cite{LMZ1} gave a complete classification of simple weight modules for the 1-spatial ageing algebra $\mathfrak{age}(1)$
and constructed many new simple weight modules for $(1+1)$-dimensional space-time Schr\"odinger algebra via the simple weight modules of $\mathfrak{age}(1)$.
In this paper, we consider the affine ageing algebra $\widehat{\mathfrak{age}}(1)$,
which is a one-dimensional central extension of the loop algebra of the 1-spatial ageing algebra $\mathfrak{age}(1)$.
We study certain representations of $\widehat{\mathfrak{age}}(1)$.

Verma-type modules are fundamental and important modules in the representation theory of Lie algebra,
they induced from one-dimensional modules for Borel subalgebra associated to certain triangular decomposition.
Verma-type modules for certain Lie algebras have been studied by many authors, see \cite{BBFK,F1,F2,GK,JK1,JK2} etc.
Imaginary Verma modules are important Verma-type modules,
they have both finite and infinite dimensional weight subspaces,
the imaginary Verma modules of affine Kac-Moody algebras have been studied in \cite{F1}.
In this paper, we consider both Verma modules and imaginary Verma modules of $\widehat{\mathfrak{age}}(1)$.
For the Verma modules, we prove that they are not simple and
we determine their irreducible quotients.
For the imaginary Verma modules,
we characterize their simplicity and for those non-simple imaginary Verma modules we determine their irreducible quotients.

The category of restricted modules (also known as smooth modules) of the Lie algebra is important
module category corresponding to the module category of the vertex algebra arising from this Lie algebra.
It is well known that the category of restricted modules
for affine Lie algebras is equivalent to the
category of modules for the corresponding affine vertex algebras \cite{FZ,L}.
In \cite{MZ}, Mazorchuk and Zhao proposed a general construction for simple Virasoro modules which are locally finite over a positive part.
This construction later are extensively used in the study of restricted modules for other Lie algebras related to Virasoro algebra,
such as \cite{CDH,CG,CHS,CY,GG,GL,GX,LPXZ,LS} etc.
For the study of simple restricted $\widehat{\mathfrak{age}}(1)$-modules,
we consider a subalgebra
\begin{equation*}
\widehat{\mathfrak{age}}(1)_{(d_1,d_2,d_3)}=\sum_{i\in\mathbb{N}}(\mathbb{C}h_i\oplus\mathbb{C}e_{d_1+i}\oplus\mathbb{C}p_{d_2+i}\oplus\mathbb{C}q_{d_3+i})\oplus\sum_{i\in\mathbb{Z}}\mathbb{C}z_i\oplus\mathbb{C}k,
\end{equation*}
where $d_1,d_2,d_3\in \mathbb{Z}$. Note that since it is a subalgebra, we have $d_1+d_3\geq d_2$.
For any simple $\widehat{\mathfrak{age}}(1)_{(d_1,d_2,d_3)}$-module $V$, we prove that
under certain conditions the induced module
$\mbox{Ind}_{\widehat{\mathfrak{age}}(1)_{(d_1,d_2,d_3)}}^{\widehat{\mathfrak{age}}(1)}(V)$
is a simple restricted $\widehat{\mathfrak{age}}(1)$-module.
Also, we show that the simple restricted $\widehat{\mathfrak{age}}(1)$-modules
satisfying certain conditions are isomorphic to the induced modules we constructed.
Finally we give an explicit example of this kind of simple restricted $\widehat{\mathfrak{age}}(1)$-module
via the standard Whittaker module of $\widehat{\mathfrak{age}}(1)$.

This paper is organized as follows.
In section \ref{sec:2}, we first introduce and study the affine ageing algebra $\widehat{\mathfrak{age}}(1)$,
then we study the second cohomology group of the loop algebra $\widetilde{\mathfrak{age}}(1)$.
In section \ref{sec:3}, we study the Verma modules and imaginary Verma modules of $\widehat{\mathfrak{age}}(1)$ respectively.
We determine the irreducible quotient modules for those modules which are not simple.
In section \ref{sec:4}, we present a construction
of simple restricted $\widehat{\mathfrak{age}}(1)$-modules, and also give an explicit example via standard Whittaker module.

Throughout the paper,
 $\mathbb{Z}$, $\mathbb{N}$, $\mathbb{Z}_+$ and $\mathbb{C}$ are the sets of integers,
nonnegative integers, positive integers and complex numbers, respectively.
The degree $\mbox{deg}(0)$ is not defined and whenever we write $\mbox{deg}(v)$
we mean $v\ne 0$.

\section{Affine ageing algebra $\widehat{\mathfrak{age}}(1)$}

\label{sec:2}
	\def\theequation{2.\arabic{equation}}
	\setcounter{equation}{0}

In this section, we consider the affine ageing algebra $\widehat{\mathfrak{age}}(1)$,
which is a one-dimensional central extension of the loop algebra of the 1-spatial ageing algebra $\mathfrak{age}(1)$.
We determine the second cohomology group of the loop algebra $\widetilde{\mathfrak{age}}(1)$.
We also recall some notions and introduce some total orders for later use.

The {\em 1-spatial ageing algebra} $\mathfrak{age}(1) $ is a complex Lie algebra spanned by elements $\{e,h,p,q,z\}$
with Lie brackets
\begin{equation}\nonumber
\begin{aligned}
&[h,e]=2e,~[h,p]=p,~[h,q]=-q,\\
&[e,p]=0,~[e,q]=p,~[p,q]=z,\\
&[z,\mathfrak{age}(1)]=0.
\end{aligned}
\end{equation}

The 1-spatial ageing algebra $\mathfrak{age}(1)$ is equipped with the triangular decomposition:
\begin{equation}
\mathfrak{age}(1)=\mathfrak{age}(1)^+\oplus\mathfrak{age}(1)^0\oplus\mathfrak{age}(1)^-,
\end{equation}
where
$ \mathfrak{age}(1)^+=\mathbb{C}e\oplus\mathbb{C}p, \mathfrak{age}(1)^0=\mathbb{C}h\oplus\mathbb{C}z, \mathfrak{age}(1)^-=\mathbb{C}q.$

We consider the one-dimensional central extension of the loop algebra $$\widetilde{\mathfrak{age}}(1)=\mathfrak{age}(1)\otimes\mathbb{C}[t,t^{-1}].$$
The second cohomology group $H^2(\widetilde{\mathfrak{age}}(1),\mathbb{C})$ is defined by
\begin{equation}
H^2(\widetilde{\mathfrak{age}}(1),\mathbb{C}):=Z^2(\widetilde{\mathfrak{age}}(1),\mathbb{C})\big/ B^2(\widetilde{\mathfrak{age}}(1),\mathbb{C}),
\end{equation}
where
\begin{equation}\nonumber
\begin{aligned}
&Z^2(\widetilde{\mathfrak{age}}(1),\mathbb{C}):=\left\{f:\widetilde{\mathfrak{age}}(1)\times\widetilde{\mathfrak{age}}(1)\rightarrow\mathbb{C}\Bigg|
\begin{aligned}
&(\mbox{i}) f(\alpha,[\beta,\gamma])+f(\gamma,[\alpha,\beta])+f(\beta,[\gamma,\alpha])=0,\\
&(\mbox{ii}) f(\alpha,\beta)=-f(\beta,\alpha), \forall \,\,\alpha,\beta,\gamma\in\widetilde{\mathfrak{age}}(1).
\end{aligned}\right\},\\
&B^2(\widetilde{\mathfrak{age}}(1),\mathbb{C}):=\left\{f:\widetilde{\mathfrak{age}}(1)\times\widetilde{\mathfrak{age}}(1)\rightarrow\mathbb{C}\Bigg|
\begin{aligned}
&\exists \,\,g:\widetilde{\mathfrak{age}}(1)\rightarrow\mathbb{C} (\mbox{linear}),\\
&\forall \,\,\alpha,\beta\in\widetilde{\mathfrak{age}}(1),f(\alpha,\beta)=g([\alpha,\beta]).
\end{aligned}\right\}.
\end{aligned}
\end{equation}
It is well known that there exists a one-to-one correspondence between $H^2(\widetilde{\mathfrak{age}}(1),\mathbb{C})$ and the set of equivalence classes of
one-dimensional central extension of $\widetilde{\mathfrak{age}}(1)$.

\begin{lem}
For all $f\in Z^2(\widetilde{\mathfrak{age}}(1),\mathbb{C})$, there exists $f^\prime\in Z^2(\widetilde{\mathfrak{age}}(1),\mathbb{C})$ such that $f^\prime(\alpha\otimes t^m,\beta\otimes t^n)=0$ for all $\alpha,\beta\in \mathfrak{age}(1)$ except the case $\alpha=\beta=h$, $m,n\in\mathbb{Z}$ and $\overline{f^\prime}=\overline{f}$ in $H^2(\widetilde{\mathfrak{age}}(1),\mathbb{C})$.
\end{lem}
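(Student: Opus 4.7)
The plan is to apply the 2-cocycle identity to carefully chosen triples of basis elements of $\widetilde{\mathfrak{age}}(1)$, deduce that $f$ is determined modulo a coboundary by the single antisymmetric function $\phi_{hh}$, and then produce a linear $g$ absorbing everything else. Throughout write $x_m := x \otimes t^m$ for $x \in \{e,h,p,q,z\}$ and $\phi_{xy}(m,n) := f(x_m, y_n)$.

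First, triples of the form $(h_k, h_l, x_m)$ with $x \in \{e, p, q\}$ force $\phi_{he}$, $\phi_{hp}$, $\phi_{hq}$ to depend only on the total degree, producing functions $F_e, F_p, F_q \colon \mathbb{Z} \to \mathbb{C}$ with $\phi_{hx}(m,n) = F_x(m+n)$. Setting $k = 0$ in the cocycle relation for $(h_k, e_l, q_m)$ yields $\phi_{eq}(l,m) = F_p(l+m)$. The triples $(h_k, e_l, e_m)$, $(h_k, p_l, p_m)$, $(h_k, q_l, q_m)$ show $\phi_{ee}$, $\phi_{pp}$, $\phi_{qq}$ depend only on the sum of arguments, so antisymmetry makes them vanish; similarly $(h_k, e_l, p_m)$ gives $\phi_{ep} = 0$. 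Triples $(h_k, z_l, x_m)$ with $x \in \{e,p,q\}$ and $(e_k, q_l, z_m)$, $(p_k, q_l, z_m)$ force $\phi_{ez} = \phi_{pz} = \phi_{qz} = \phi_{zz} = 0$.

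The crucial step is to show $\phi_{pq}(m,n)$ depends only on $m+n$, for then the relation obtained from $(h_k, p_l, q_m)$, namely
\[
\phi_{hz}(k, l+m) = \phi_{pq}(k+l, m) - \phi_{pq}(l, k+m),
\]
forces $\phi_{hz} \equiv 0$. This vanishing is essential because $z$ is central in $\mathfrak{age}(1)$, so $dg(h_m, z_n) = 0$ for every linear $g$ and $\phi_{hz}$ is invariant under coboundaries. Applying the cocycle identity to $(e_a, q_b, q_c)$ produces $\phi_{pq}(a+c, b) = \phi_{pq}(a+b, c)$; as $a, b, c$ vary over $\mathbb{Z}^3$ with $a+b+c = N$ fixed, both $(a+c, b)$ and $(a+b, c)$ sweep out all pairs of integers summing to $N$, so $\phi_{pq}(x, N-x)$ is independent of $x$ and there exists $\widetilde{\psi} \colon \mathbb{Z} \to \mathbb{C}$ with $\phi_{pq}(m,n) = \widetilde{\psi}(m+n)$. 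I expect the identification of this Jacobi triple $(e_a, q_b, q_c)$ to be the main obstacle; the other structural relations follow a uniform pattern, but this one requires the slightly less obvious choice of two $q$'s together with a single $e$.

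Finally I would define $g \colon \widetilde{\mathfrak{age}}(1) \to \mathbb{C}$ linearly by
\[
g(h_k) = 0, \quad g(e_k) = \tfrac{1}{2} F_e(k), \quad g(p_k) = F_p(k), \quad g(q_k) = -F_q(k), \quad g(z_k) = \widetilde{\psi}(k),
\]
and set $f' := f - dg \in Z^2(\widetilde{\mathfrak{age}}(1),\mathbb{C})$. A direct computation using the brackets of $\widetilde{\mathfrak{age}}(1)$ gives $f'(h_m, e_n) = f'(h_m, p_n) = f'(h_m, q_n) = f'(e_m, q_n) = f'(p_m, q_n) = 0$, while $f'(h_m, h_n) = \phi_{hh}(m,n)$ is untouched; all remaining pairings are zero by the structural analysis above. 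Since $dg \in B^2(\widetilde{\mathfrak{age}}(1),\mathbb{C})$, we have $\overline{f'} = \overline{f}$ in $H^2(\widetilde{\mathfrak{age}}(1),\mathbb{C})$, which is exactly the conclusion of the lemma.
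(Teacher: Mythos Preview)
Your proof is correct and follows essentially the same route as the paper's own argument: both compute the cocycle on well-chosen Jacobi triples, identify the key step as showing $\phi_{pq}(m,n)$ depends only on $m+n$ via the triple $(e,q,q)$ (the paper writes it as $(q_m,e_n,q_0)$), deduce $\phi_{hz}\equiv 0$ from the triple $(h,p,q)$, and then subtract the coboundary $dg$ with $g(e_k)=\tfrac12 F_e(k)$, $g(p_k)=F_p(k)$, $g(q_k)=-F_q(k)$, $g(z_k)=\widetilde\psi(k)$ to kill everything except $\phi_{hh}$. The only cosmetic difference is that the paper packages the coboundary as an auxiliary cocycle $f^*$ before exhibiting $g$, whereas you go straight to $f'=f-dg$; the content is identical.
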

\begin{proof}
For any $m,n,l\in\mathbb{Z}$ and $f\in Z^2(\widetilde{\mathfrak{age}}(1),\mathbb{C})$, we have
\begin{equation*}
\begin{aligned}
0&=f(e\otimes t^m,[h\otimes1,e\otimes t^n])+f(h\otimes1,[e\otimes t^n,e\otimes t^m])+f(e\otimes t^n,[e\otimes t^m, h\otimes 1])\\
&=2f(e\otimes t^m,e\otimes t^n)-2f(e\otimes t^n,e\otimes t^m)=4f(e\otimes t^m,e\otimes t^n),
\end{aligned}
\end{equation*}
\begin{equation*}
\begin{aligned}
0&=f(e\otimes t^m,[h\otimes1,p\otimes t^n])+f(h\otimes1,[p\otimes t^n,e\otimes t^m])+f(p\otimes t^n,[e\otimes t^m, h\otimes 1])\\
&=f(e\otimes t^m,p\otimes t^n)-2f(p\otimes t^n,e\otimes t^m)=3f(e\otimes t^m,p\otimes t^n),
\end{aligned}
\end{equation*}
\begin{equation*}
\begin{aligned}
0&=f(q\otimes t^m,[e\otimes t^n,q\otimes 1])+f(e\otimes t^n,[q\otimes 1,q\otimes t^m])+f(q\otimes 1,[q\otimes t^m, e\otimes t^n])\\
&=f(q\otimes t^m,p\otimes t^{n})-f(q\otimes 1,p\otimes t^{m+n}).
\end{aligned}
\end{equation*}
Similarly we have
\begin{equation*}
\begin{aligned}
0&=f(z\otimes t^m,e\otimes t^n)=f(p\otimes t^m,p\otimes t^n)=f(z\otimes t^m,p\otimes t^n)\\
&=f(q\otimes t^m,q\otimes t^n)=f(z\otimes t^m,q\otimes t^n)=f(z\otimes t^m,z\otimes t^n)
\end{aligned}
\end{equation*}
and
\begin{equation*}
\begin{aligned}
&f(h\otimes1,p\otimes t^{m+n})=f(h\otimes t^m,p\otimes t^n)=f(e\otimes t^m,q\otimes t^n),\\
&f(h\otimes1,q\otimes t^{m+n})=f(h\otimes t^m,q\otimes t^n),\\
&f(h\otimes1,e\otimes t^{m+n})=f(h\otimes t^m,e\otimes t^n).
\end{aligned}
\end{equation*}

Then we also have
\begin{equation*}
\begin{aligned}
0&=f(h\otimes t^m,[p\otimes t^n,q\otimes 1])+f(p\otimes t^n,[q\otimes 1,h\otimes t^m])+f(q\otimes 1,[h\otimes t^m, p\otimes t^n])\\
&=f(h\otimes t^m,z\otimes t^n)+f(p\otimes t^n,q\otimes t^m)+f(q\otimes 1,p\otimes t^{m+n})=f(h\otimes t^m,z\otimes t^n).
\end{aligned}
\end{equation*}

Define $f^*:\widetilde{\mathfrak{age}}(1)\times\widetilde{\mathfrak{age}}(1)\rightarrow\mathbb{C}$ with $f^*(h\otimes t^m, h\otimes t^n)=0$ for $m,n\in\mathbb{Z}$ and $f^*=f$ for others.
It is clear that $f^*\in Z^2(\widetilde{\mathfrak{age}}(1),\mathbb{C})$.
For any $m\in\mathbb{Z}$, we define
\begin{equation*}
\begin{aligned}
g:&\widetilde{\mathfrak{age}}(1)\rightarrow\mathbb{C}\\
&h\otimes t^m\longmapsto 0\\
&e\otimes t^m\longmapsto \frac{1}{2}f^\prime(h\otimes1,e\otimes t^{m})\\
&p\otimes t^m\longmapsto f^\prime(h\otimes1,p\otimes t^{m})\\
&q\otimes t^m\longmapsto -f^\prime(h\otimes1,q\otimes t^{m})\\
&z\otimes t^m\longmapsto -f^\prime(q\otimes1,p\otimes t^{m}).
\end{aligned}
\end{equation*}
Then $f^*(\alpha,\beta)=g([\alpha,\beta])$ for any $\alpha,\beta\in\widetilde{\mathfrak{age}}(1)$ and hence $f^*\in B^2(\widetilde{\mathfrak{age}}(1),\mathbb{C})$.
Let $f^\prime=f-f^*$, then $\overline{f^\prime}=\overline{f}$ in $H^2(\widetilde{\mathfrak{age}}(1),\mathbb{C})$ and $f^\prime(\alpha\otimes t^m,\beta\otimes t^n)=0$ for all $\alpha,\beta\in \mathfrak{age}(1)$ except the case $\alpha=\beta=h$, $m,n\in\mathbb{Z}$.
\end{proof}

Then we consider the Lie algebra $\widehat{\mathfrak{age}}(1)=\mathfrak{age}(1)\otimes\mathbb{C}[t,t^{-1}]\oplus\mathbb{C}k$ with Lie brackets
$$[\alpha\otimes t^m, \beta\otimes t^n]=[\alpha,\beta]\otimes t^{m+n}+m(\alpha\mid \beta)\delta_{m+n,0}k,\,\,\,\, [\widehat{\mathfrak{age}}(1), k]=0,$$
for $\alpha,\beta\in\mathfrak{age}(1),m,n\in\mathbb{Z}$,
$(h\mid h)=1$ and others are zero. Then $\widehat{\mathfrak{age}}(1)$ is a one-dimensional central extension of $\widetilde{\mathfrak{age}}(1)$,
we call it {\em affine ageing algebra associated to $\mathfrak{age}(1)$}.
It is clear that $k,z\otimes t^m (m\in\mathbb{Z})$ are central elements.

\begin{rem}
{\em Let $(\cdot\mid\cdot)$ be a invariant symmetric bilinear form on $\mathfrak{age}(1)$, then we have
\begin{equation*}
\begin{aligned}
&(p\mid q)=([e,q]\mid q)=(e\mid[q,q])=0,\\
&(p\mid h)=-([p,h]\mid h)=-(p\mid[h,h])=0,\\
&(p\mid e)=([h,p]\mid e)=(h\mid[p,e])=0.
\end{aligned}
\end{equation*}
Similarly we have $(\alpha\mid \beta)=0$ for all $\alpha,\beta\in\mathfrak{age}(1)$ except the case $\alpha=\beta=h$.}
\end{rem}

It is clear that $\widehat{\mathfrak{age}}(1)$ is $\mathbb{Z}$-graded:
\begin{equation}
\widehat{\mathfrak{age}}(1)=\coprod_{n\in\mathbb{Z}}\widehat{\mathfrak{age}}(1)^{(n)},
\end{equation}
where
$\widehat{\mathfrak{age}}(1)^{(0)}=\mathfrak{age}(1)\oplus\mathbb{C}k, ~~\widehat{\mathfrak{age}}(1)^{(n)}=\mathfrak{age}(1)\otimes t^n~~\mbox{for}~~n\ne 0.$

For convenience, we shall denote $\alpha\otimes t^n$ by $\alpha_n$ for $\alpha\in\mathfrak{age}(1),n\in\mathbb{Z}$.

Finally we recall and introduce some notions and total orders for later use.
Let $\mathbb{M}$ be the set of all infinite vectors of the form
${\bf i}=(...,i_2,i_1)$ with entries in $\mathbb{N}$
such that the number of nonzero entries is finite.
Let {\bf 0} denote the element $(...,0,0)\in \mathbb{M}$ and
$\epsilon_i$ denote the element $(...,0,1,0,...,0)\in \mathbb{M}$ for $i\in \mathbb{Z}_+$,
 where 1 is in the $i$-th position from right.
 For any ${\bf i}\in \mathbb{M}$, we denote
${\bf w(i)}=\sum_{s\in \mathbb{Z}_+}{s\cdot i_s}$ and ${\bf d(i)}=\sum_{s\in \mathbb{Z}_+}{i_s}$,
which are nonnegative integers.
Let $\succ$ be the
 {\em reverse lexicographical} total order on $\mathbb{M}$,
 that is, for any ${\bf i},{\bf j} \in \mathbb{M}$,
\begin{equation*}
{\bf j}\succ {\bf i}\ \Leftrightarrow \ {\rm there\ exists}\ r\in \mathbb{Z}_+\ {\rm such\ that}\
 (j_s=i_s, \forall\; 1\leqslant s<r)\ {\rm and}\  j_r>i_r.
\end{equation*}

\section{Verma-type modules of $\widehat{\mathfrak{age}}(1)$}

\label{sec:3}
	\def\theequation{3.\arabic{equation}}
	\setcounter{equation}{0}

In this section, we study the Verma modules and imaginary Verma modules
of $\widehat{\mathfrak{age}}(1)$.
For the Verma modules, we prove that they are not simple and determine their irreducible quotients.
For the imaginary Verma modules,
we prove that these modules are simple under certain conditions and determine the irreducible quotient for the non-simple modules.

\subsection{Verma modules of $\widehat{\mathfrak{age}}(1)$}

We note that $\widehat{\mathfrak{age}}(1)$ can be equipped with the triangular decomposition:
\begin{equation}
\widehat{\mathfrak{age}}(1)=\widehat{\mathfrak{age}}(1)^+\oplus\widehat{\mathfrak{age}}(1)^0\oplus\widehat{\mathfrak{age}}(1)^-=\widehat{\mathfrak{age}}(1)^{\geq0}\oplus\widehat{\mathfrak{age}}(1)^-,
\end{equation}
where
\begin{equation*}
\begin{aligned}
&\widehat{\mathfrak{age}}(1)^+=\coprod_{n\in \mathbb{Z}_+}\widehat{\mathfrak{age}}(1)^{(n)}, \\
&\widehat{\mathfrak{age}}(1)^0=\widehat{\mathfrak{age}}(1)^{(0)},\\
&\widehat{\mathfrak{age}}(1)^-=\coprod_{-n\in\mathbb{Z}_+}\widehat{\mathfrak{age}}(1)^{(n)},\\
&\widehat{\mathfrak{age}}(1)^{\geq0}=\widehat{\mathfrak{age}}(1)^+\oplus\widehat{\mathfrak{age}}(1)^0.
\end{aligned}
\end{equation*}

For $\mathcal{h},\mathcal{k}\in\mathbb{C}$, we define the Verma module of $\widehat{\mathfrak{age}}(1)$ by
$$M_{V}(\mathcal{h},\mathcal{k})=U(\widehat{\mathfrak{age}}(1))\otimes_{U(\widehat{\mathfrak{age}}(1)^{\geq0})}\mathbb{C}v,$$
where $\mathbb{C}v$ is the one dimensional $\widehat{\mathfrak{age}}(1)^{\geq0}$-module on which $h$ acts as $\mathcal{h}$, $k$ acts as $\mathcal{k}$
and $\widehat{\mathfrak{age}}(1)^+$ acts as zero. Since $\mathbb{C}v$ is an one-dimensional $\widehat{\mathfrak{age}}(1)^{0}$-module,
we have $ev=pv=qv=zv=0$.

For ${\bf i,j,l,m,n}\in \mathbb{M}$, denote
$$h^{\bf i}p^{\bf j}e^{\bf l}q^{\bf m}z^{\bf n}=\cdots h_{-2}^{i_2}h_{-1}^{i_1}\cdots p_{-2}^{j_2}p_{-1}^{j_1}\cdots e_{-2}^{l_2}e_{-1}^{l_1}\cdots q_{-2}^{m_2}q_{-1}^{m_1}
\cdots z_{-2}^{n_2}z_{-1}^{n_1}\in U(\widehat{\mathfrak{age}}(1)).$$
By the PBW Theorem, each element of $M_{V}(\mathcal{h},\mathcal{k})$ can be uniquely written in the form
\begin{equation}\label{eq:idelt0}
  \sum_{{\bf i,j,l,m,n}\in \mathbb{M}}a_{\bf i,j,l,m,n}h^{\bf i}p^{\bf j}e^{\bf l}q^{\bf m}z^{\bf n}v,
\end{equation}
where all $a_{\bf i,j,l,m,n}\in\mathbb{C}$ and only finitely many of them are nonzero.

It is clear that $M_{V}(\mathcal{h},\mathcal{k})$ is $\mathbb{N}$-graded:
\begin{equation}
M_{V}(\mathcal{h},\mathcal{k})=\coprod_{s\in\mathbb{N}}M_{V}(\mathcal{h},\mathcal{k})^{(s)},
\end{equation}
where $M_{V}(\mathcal{h},\mathcal{k})^{(s)}=\bigoplus_{{\bf w(i+j+l+m+n)}=s}\mathbb{C}h^{\bf i}p^{\bf j}e^{\bf l}q^{\bf m}z^{\bf n}v$.

\begin{lem}
For $\mathcal{h},\mathcal{k}\in\mathbb{C}$, the Verma module $M_{V}(\mathcal{h},\mathcal{k})$ is not simple.
\end{lem}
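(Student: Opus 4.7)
The plan is to exhibit an explicit singular vector in $M_V(\mathcal{h},\mathcal{k})$ that generates a proper nonzero submodule, and this should work for every choice of $\mathcal{h},\mathcal{k}$. The natural candidate is $w:=q_{-1}v$. By the PBW description in \eqref{eq:idelt0}, $w$ corresponds to the basis monomial $q^{\epsilon_{1}}v$ and is therefore nonzero; by construction $w\in M_V(\mathcal{h},\mathcal{k})^{(1)}$.

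The key step is to verify that $\widehat{\mathfrak{age}}(1)^{+}w=0$. For $\alpha\in\{h,e,p,q,z\}$ and $n\ge 1$, since $\alpha_{n}v=0$ I compute
\[
\alpha_{n}w=[\alpha_{n},q_{-1}]v=\bigl([\alpha,q]_{n-1}+n(\alpha\mid q)\delta_{n-1,0}k\bigr)v.
\]
The central contribution vanishes because the invariant bilinear form on $\mathfrak{age}(1)$ is supported only on the pair $(h,h)$, as made explicit in the Remark of Section~\ref{sec:2}. Using $[h,q]=-q$, $[e,q]=p$, $[p,q]=z$, $[q,q]=0$ and $[z,q]=0$, the remaining term reduces to a scalar multiple of one of $q_{n-1}v$, $p_{n-1}v$, $z_{n-1}v$, or zero. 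For $n\ge 2$ the index $n-1\ge 1$ annihilates $v$ by the definition of the induced module, and for $n=1$ the three candidates $q_{0}v$, $p_{0}v$, $z_{0}v$ all vanish because the one-dimensional $\widehat{\mathfrak{age}}(1)^{\ge 0}$-module $\mathbb{C}v$ forces $e,p,q,z$ to act by zero.

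Once $w$ is known to be singular, the submodule it generates equals $U(\widehat{\mathfrak{age}}(1)^{-})U(\widehat{\mathfrak{age}}(1)^{0})w$, which lies inside $\coprod_{s\ge 1}M_V(\mathcal{h},\mathcal{k})^{(s)}$: elements of $\widehat{\mathfrak{age}}(1)^{0}$ preserve the $\mathbb{N}$-grading while elements of $\widehat{\mathfrak{age}}(1)^{-}$ strictly increase it. In particular this submodule omits $v\in M_V(\mathcal{h},\mathcal{k})^{(0)}$, so it is proper and nonzero, and hence $M_V(\mathcal{h},\mathcal{k})$ is not simple. No genuine obstacle is anticipated; the only nontrivial input is the restricted form of the invariant bilinear form from Section~\ref{sec:2}, which is precisely what lets the central cocycle terms drop out uniformly in the computation above.
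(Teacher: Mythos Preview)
Your argument is correct and follows essentially the same approach as the paper: exhibit a degree-one singular vector and observe that the graded submodule it generates misses $v$. The only difference is that the paper chooses $v'=z_{-1}v$ rather than $q_{-1}v$, which makes the verification $\widehat{\mathfrak{age}}(1)^{+}v'=0$ immediate from the centrality of the $z_i$ without any commutator computation.
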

\begin{proof}
Let $v^\prime=z_{-1}v$, we have $\widehat{\mathfrak{age}}(1)^+v^\prime=0$.
Then the submodule of $M_{V}(\mathcal{h},\mathcal{k})$ generated by $v^\prime$ doesn't contain $v$, hence it is a proper submodule.
Therefore $M_{V}(\mathcal{h},\mathcal{k})$ is not simple.
\end{proof}

Now we want to determine the irreducible quotient of the Verma module $M_{V}(\mathcal{h},\mathcal{k})$.
First we give the definition of the primitive vector in a quotient of $M_{V}(\mathcal{h},\mathcal{k})$.

\begin{defi}
{\em For $\mathcal{h},\mathcal{k}\in\mathbb{C}$, let $V(\mathcal{h},\mathcal{k})$ be a quotient of $M_{V}(\mathcal{h},\mathcal{k})$.
For $n\in\mathbb{N}$, a vector $v^\prime\in V(\mathcal{h},\mathcal{k})^{(n)}$ is called {\em primitive} if there exists a submodule $U$ in $V$ such that
$v^\prime\notin U$ and $\widehat{\mathfrak{age}}(1)^+v^\prime\subset U$.}
\end{defi}

\begin{prop}
For $\mathcal{h},\mathcal{k}\in\mathbb{C}$, let $V(\mathcal{h},\mathcal{k})$ be a quotient of $M_{V}(\mathcal{h},\mathcal{k})$.
$V(\mathcal{h},\mathcal{k})$ is simple if and only if there are no primitive vectors in $V(\mathcal{h},\mathcal{k})$ except $\lambda v(\lambda\in\mathbb{C})$.
\end{prop}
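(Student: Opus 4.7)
The plan is to prove the two implications separately, using that the quotient $V:=V(\mathcal{h},\mathcal{k})$ inherits the $\mathbb{N}$-grading from $M_V(\mathcal{h},\mathcal{k})$ and that, under the triangular decomposition, $\widehat{\mathfrak{age}}(1)^+$, $\widehat{\mathfrak{age}}(1)^0$, and $\widehat{\mathfrak{age}}(1)^-$ act by strictly lowering, preserving, and strictly raising the level, respectively. The basic observation is that $V^{(0)}=\mathbb{C}v$: since $M_V(\mathcal{h},\mathcal{k})^{(0)}=\mathbb{C}v$ and the image of $v$ in $V$ must be nonzero (as $v$ generates $M_V(\mathcal{h},\mathcal{k})$ and $V$ is nonzero).

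For $(\Rightarrow)$, assume $V$ is simple and let $v'\in V^{(n)}$ be primitive with witnessing submodule $U$. Because $v'\notin U$, simplicity of $V$ forces $U=0$, and hence $\widehat{\mathfrak{age}}(1)^+v'=0$. Then $U(\widehat{\mathfrak{age}}(1))v'$ is a nonzero submodule of $V$ and, by simplicity, equals $V$; applying PBW to the triangular decomposition together with $\widehat{\mathfrak{age}}(1)^+v'=0$ gives
\begin{equation*}
V=U(\widehat{\mathfrak{age}}(1)^-)U(\widehat{\mathfrak{age}}(1)^0)v'\subset\bigoplus_{m\geq n}V^{(m)},
\end{equation*}
because $U(\widehat{\mathfrak{age}}(1)^0)v'\subset V^{(n)}$ and $U(\widehat{\mathfrak{age}}(1)^-)$ cannot decrease the level. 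Combined with $V^{(0)}\neq 0$, this forces $n=0$, and hence $v'\in V^{(0)}=\mathbb{C}v$.

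For $(\Leftarrow)$, I argue contrapositively. Suppose $V$ is not simple and fix a nonzero proper submodule $W$ of $V$; set
\begin{equation*}
n_0:=\min\{n\in\mathbb{N}:W\cap V^{(n)}\neq 0\}.
\end{equation*}
Since $v$ generates $V$ and $W$ is proper, $v\notin W$, so $W\cap V^{(0)}=0$ and $n_0\geq 1$. Pick nonzero $v'\in W\cap V^{(n_0)}$; then $\widehat{\mathfrak{age}}(1)^+v'\subset W\cap\bigoplus_{m<n_0}V^{(m)}=0$ by the level-lowering property of $\widehat{\mathfrak{age}}(1)^+$ and the minimality of $n_0$. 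Taking $U=0$ exhibits $v'$ as a primitive vector in $V^{(n_0)}$ with $n_0\geq 1$, hence $v'\notin\mathbb{C}v$, contradicting the hypothesis.

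The one delicate point is the use of $W\cap V^{(n)}$ in the backward step, which presumes $W$ respects the $\mathbb{N}$-grading — the natural convention for graded quotients of Verma modules. Should one wish to avoid this convention, the same argument applies after replacing $W$ by its largest graded submodule $\bigoplus_n(W\cap V^{(n)})$, together with a separate verification (for example, via the $h_0$-weight decomposition of $V$) that this graded replacement is nonzero whenever $W$ is.
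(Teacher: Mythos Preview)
Your argument for both directions is essentially the paper's: for $(\Rightarrow)$ simplicity forces the witnessing submodule $U$ to be zero, so $\widehat{\mathfrak{age}}(1)^+v'=0$ and then $V=U(\widehat{\mathfrak{age}}(1)^{\leq 0})v'$ forces $n=0$; for $(\Leftarrow)$ a minimal-level vector in a proper submodule is primitive and not in $\mathbb{C}v$. You are in fact more explicit than the paper, which simply writes ${V'}^{(n)}$ without comment and so tacitly treats submodules as graded.

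One caution about your closing remark: the proposed fix via the $h_0$-weight decomposition does not work as stated, because the $h_0$-eigenspaces do not coincide with the level pieces $V^{(n)}$. For instance $h_{-1}v\in V^{(1)}$ and $h_{-2}v\in V^{(2)}$ both have $h_0$-weight $\mathcal{h}$, so $h_0$-stability of $W$ does not by itself yield a nonzero element of some $W\cap V^{(n)}$. The paper sidesteps this by implicitly working in the graded category; if you want the ungraded statement you would need a different device (for example, adjoining a degree derivation $d$ to $\widehat{\mathfrak{age}}(1)$ and letting it act as $-n$ on $V^{(n)}$).
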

\begin{proof}
$\Rightarrow$:
If $v^\prime$ is a primitive vector in $V(\mathcal{h},\mathcal{k})^{(n)}$ for $n\in\mathbb{N}$,
since $V(\mathcal{h},\mathcal{k})$ is simple,
then $\widehat{\mathfrak{age}}(1)^+v^\prime=0$ and $V(\mathcal{h},\mathcal{k})=U(\widehat{\mathfrak{age}}(1)^{\leq0})v^\prime$,
hence $n$ must be 0, then $v^\prime$ is proportional to $v$.

$\Leftarrow$:If $V(\mathcal{h},\mathcal{k})$ is not simple,
then there exists a proper submodule $V^\prime$ and $v\notin V^\prime$.
We take the smallest integer $n$ such that ${V^\prime}^{(n)}\ne0$, let $v^\prime\in {V^\prime}^{(n)}$, we have $\widehat{\mathfrak{age}}(1)^+v^\prime=0$,
then $v^\prime$ is a primitive vectors and $v^\prime$ is not proportional to $v$,
it is a contradiction. Therefore $V(\mathcal{h},\mathcal{k})$ is simple.
\end{proof}

Then we get the main result of this subsection.

\begin{thm}\label{vermathm}
For $\mathcal{h},\mathcal{k}\in\mathbb{C}$,\\
{\rm (1)} if $\mathcal{k}=0$, the irreducible quotient of $M_{V}(\mathcal{h},\mathcal{k})$ is isomorphic to the one-dimensional $\widehat{\mathfrak{age}}(1)$-module $\mathbb{C}w$;\\
{\rm (2)} if $\mathcal{k}\ne0$, the irreducible quotient of $M_{V}(\mathcal{h},\mathcal{k})$ is isomorphic to the $\widehat{\mathfrak{age}}(1)$-module $U_h$ with generator $w$ and relations $\alpha_iw=0$ $(\alpha=e,p,q,z$ and $i\in\mathbb{Z})$, $h_iw=0$ $(i\in\mathbb{Z}_+)$, $hw=\mathcal{h}w, kw=\mathcal{k}w$.
\end{thm}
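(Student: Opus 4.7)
The plan is to use the Proposition above iteratively: find primitive vectors in $M_V(\mathcal{h},\mathcal{k})$, quotient them out, and repeat, identifying the final irreducible quotient with $U_h$ (respectively with $\mathbb{C}w$).

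First, since each $z_i$ is central in $\widehat{\mathfrak{age}}(1)$ and $z_jv=0$ for $j\geq 0$, the vectors $z_{-n}v$ ($n\in\mathbb{Z}_+$) are primitive, and the submodule $N_z$ they generate is proper by the PBW description of the elements. In $M_V(\mathcal{h},\mathcal{k})/N_z$ the mode bracket $[p_m,q_n]=z_{m+n}$ becomes zero. I would then induct on $n\geq 1$ to show that $p_{-n}\overline v$, then $e_{-n}\overline v$ and $q_{-n}\overline v$, are primitive modulo the submodule generated by previously handled such vectors. The key computations are $h_m\cdot p_{-n}\overline v=p_{m-n}\overline v$, $h_m\cdot e_{-n}\overline v=2e_{m-n}\overline v$, $h_m\cdot q_{-n}\overline v=-q_{m-n}\overline v$, $e_m\cdot q_{-n}\overline v=p_{m-n}\overline v$, and $q_m\cdot e_{-n}\overline v=-p_{m-n}\overline v$, each of which vanishes for $m\geq n$ (using $e_0v=p_0v=q_0v=z_0v=0$) and lies in the submodule already built for $m<n$. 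All other $\widehat{\mathfrak{age}}(1)^+$-actions on these vectors are zero thanks to $[e,p]=[p,p]=[q,q]=0$ and the prior killing of $z$. Let $N$ denote the resulting proper submodule.

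In $\overline U:=M_V(\mathcal{h},\mathcal{k})/N$ every $\alpha_i$ with $\alpha\in\{e,p,q,z\}$, $i\in\mathbb{Z}$, acts as zero on $\overline v$; a short induction on the number of $h$-modes, using that $[h_m,\alpha_n]$ is a scalar multiple of $\alpha_{m+n}$, extends this to all of $\overline U$. Hence $\overline U$ carries a faithful action of only the Heisenberg subalgebra $\bigoplus_{n\in\mathbb{Z}}\mathbb{C}h_n\oplus\mathbb{C}k$ with $[h_m,h_n]=m\delta_{m+n,0}k$, and it is precisely the Fock module of highest weight $\mathcal{h}$ at level $\mathcal{k}$. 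For $\mathcal{k}\ne 0$ this is the classical irreducible Heisenberg Fock module, giving $\overline U\cong U_h$. For $\mathcal{k}=0$ the Heisenberg becomes abelian, so each $h_{-n}\overline v$ is now primitive; quotienting these out as well yields the one-dimensional module $\mathbb{C}w$ on which $h$ acts by $\mathcal{h}$ and every other generator acts as zero.

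The main obstacle I expect is the bookkeeping in the induction, specifically choosing the order $z\to p\to\{e,q\}$ so that the $\widehat{\mathfrak{age}}(1)^+$-image of every candidate primitive vector lands in the submodule already constructed, rather than in a still-unexamined direction. A cleaner alternative is to construct $U_h$ directly as an $\widehat{\mathfrak{age}}(1)$-module by taking the Heisenberg Fock module on $\{h_n,k\}$ and extending the action by $\alpha_i=0$ for $\alpha\in\{e,p,q,z\}$ and $i\in\mathbb{Z}$; all nontrivial brackets of $\widehat{\mathfrak{age}}(1)$ involving $e,p,q,z$ stay in the annihilated subspace (their values are again $e,p,q,z$ modes or the decoupled central $z_i$), so the defining relations of $\widehat{\mathfrak{age}}(1)$ are satisfied. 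One then obtains a canonical surjection $M_V(\mathcal{h},\mathcal{k})\twoheadrightarrow U_h$ from the universal property of the Verma module, and the irreducibility claim reduces to the standard statement for Heisenberg Fock modules when $\mathcal{k}\ne 0$ (and is obvious when $\mathcal{k}=0$).
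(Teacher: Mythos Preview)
Your primary approach---inductively showing that $\alpha_{-n}v$ is primitive for $\alpha\in\{z,p,e,q\}$, quotienting, and identifying the result as the Heisenberg Fock module at level $\mathcal{k}$---is essentially the paper's proof; the only cosmetic differences are that the paper treats $e,p,q$ simultaneously without first killing $z$ (since $z_{\ge 0}v=0$ already makes $p_m q_{-n}v=z_{m-n}v=0$ for $m\ge n$), and the paper reproves Fock simplicity in place via a reverse-lexicographic degree argument (Claim 2) rather than citing it. Your suggested alternative---building $U_h$ directly as the Heisenberg Fock module with $e_i,p_i,q_i,z_i$ acting by zero, checking this respects all brackets, and then using the Verma universal property to obtain the surjection---is a genuinely cleaner route that bypasses the primitivity induction altogether; the paper does not take this shortcut, but it is valid and arguably more conceptual.
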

\begin{proof}
It is clear that $z_iv$ is primitive for all $i\in-\mathbb{Z}_+$.

\vspace{3mm}
\noindent{\bf Claim 1:} $\alpha_iv$ is primitive for all $\alpha=p,q,e$ and $i\in-\mathbb{Z}_+$.
\vspace{3mm}

We prove this claim by induction.
For $i=-1$, since for all $j\in \mathbb{Z}_+$, we have
\begin{equation*}
\begin{aligned}
&h_je_{-1}v=[h_j,e_{-1}]v=2e_{j-1}v=0,\\
&q_je_{-1}v=[q_j,e_{-1}]v=-p_{j-1}v=0,\\
&e_je_{-1}v=e_{-1}e_jv=0, \,\,p_je_{-1}v=e_{-1}p_jv=0 ,\\
&z_je_{-1}v=e_{-1}z_jv=0 ,\\
\end{aligned}
\end{equation*}
then $\widehat{\mathfrak{age}}(1)^+e_{-1}v=0$, hence $e_{-1}v$ is primitive.
Similarly we have $p_{-1}v$ and $q_{-1}v$ is primitive.

Suppose that $\alpha_iv$ is primitive for all $\alpha=p,q,e$ and $-r\leq i\in-\mathbb{Z}_+$.
Let $V^\prime$ be the submodule of $M_{V}(\mathcal{h},\mathcal{k})$ generated by all
$\alpha_iv$ ($\alpha=p,q,e$ and $-r\leq i\in-\mathbb{Z}_+$) and $z_iv$ ($i\in-\mathbb{Z}_+$).
For $i=-r-1$, since for all $j\in \mathbb{Z}_+$, we have
\begin{equation*}
\begin{aligned}
&h_je_{-r-1}v=[h_j,e_{-r-1}]v=2e_{-r-1+j}v\in V^\prime,\\
&q_je_{-r-1}v=[q_j,e_{-r-1}]v=-p_{-r-1+j}v\in V^\prime,\\
&e_je_{-r-1}v=e_{-r-1}e_jv=0,\,\,p_je_{-r-1}v=e_{-r-1}p_jv=0 ,\\
&z_je_{-r-1}v=e_{-r-1}z_jv=0 ,\\
\end{aligned}
\end{equation*}
then $\widehat{\mathfrak{age}}(1)^+e_{-r-1}v\in V^\prime$.
It is clear that $e_{-r-1}v\notin V^\prime$,
 hence $e_{-r-1}v$ is primitive.
Similarly we have $p_{-r-1}v$ and $q_{-r-1}v$ is primitive.

Then we have $\alpha_iv$ is primitive for all $\alpha=p,q,e$ and $i\in-\mathbb{Z}_+$.

\vspace{3mm}
(1)
For $\mathcal{k}=0$, we can prove that $h_iv$ is primitive for all $i\in-\mathbb{Z}_+$ similarly.
Let $V^{\prime\prime}$ be the submodule of $M_{V}(\mathcal{h},\mathcal{k})$ generated by all
$\alpha_iv$ ($\alpha=p,q,e,h,z$ and $i\in-\mathbb{Z}_+$).
Then $M_{V}(\mathcal{h},\mathcal{k})/V^{\prime\prime}$ is one-dimensional, hence it is simple.
Therefore the irreducible quotient of $M_{V}(\mathcal{h},\mathcal{k})$ is isomorphic to the one-dimensional $\widehat{\mathfrak{age}}(1)$-module $\mathbb{C}w$.

\vspace{3mm}
(2)
Let $V^{\prime\prime\prime}$ be the submodule of $M_{V}(\mathcal{h},\mathcal{k})$ generated by all
$\alpha_iv$ ($\alpha=p,q,e,z$ and $i\in-\mathbb{Z}_+$).
Let $U=M_{V}(\mathcal{h},\mathcal{k})/V^{\prime\prime\prime}$,
 each element of $U$ can be uniquely written in the form
\begin{equation}\label{eq}
  \sum_{{\bf i}\in \mathbb{M}}a_{\bf i}h^{\bf i}v+V^{\prime\prime\prime},
\end{equation}
where all $a_{\bf i}\in \mathbb{C}$ and only finitely many of them are nonzero.

For any $v^\prime\in U$ written in the form of (\ref{eq}),
 we denote by $\mbox{supp}(v^\prime)$ the set of all ${\bf i}\in \mathbb{M}$
 such that $a_{\bf i}\ne 0$.
 For a nonzero $v^\prime\in U$,
  let $\mbox{deg}(v^\prime)$ denote the maximal
   (with respect to the reverse lexicographical total order on $\mathbb{M}$)
   element in $\mbox{supp}(v^\prime)$.

   \vspace{3mm}
\noindent{\bf Claim 2:} If $\mbox{deg}(v^\prime)={\bf i}$, let $r={\rm min}\{s:i_s \ne 0\}>0$,
then $\mbox{deg}(h_rv^\prime)={\bf i}-\epsilon_r$, where $\epsilon_r$ is the element $(...,0,1,0,...,0)\in \mathbb{M}$,
 where 1 is in the $r$-th position from right.

\vspace{3mm}
It suffices to consider those ${\bf i^\prime}\in\mbox{supp}(v^\prime)$ with $h_{r}(a_{\bf i^\prime}h^{\bf i^\prime}v+V^{\prime\prime\prime})\ne0+V^{\prime\prime\prime}$,
then $i_{r}^\prime\ne0$.
Since we have
 \begin{equation*}
\begin{aligned}
h_{r}(a_{\bf i^\prime}h^{\bf i^\prime}v+V^{\prime\prime\prime})&=a_{\bf i^\prime}[h_{r},h^{\bf i^\prime}]v+V^{\prime\prime\prime}=a_{\bf i^\prime}h^{{\bf i^\prime}-\epsilon_r}[h_{r},h_{-r}]v+V^{\prime\prime\prime}\\&=
a_{\bf i^\prime}h^{{\bf i^\prime}-\epsilon_r}rkv+V^{\prime\prime\prime}=r\mathcal{k}a_{\bf i^\prime}h^{{\bf i^\prime}-\epsilon_r}v+V^{\prime\prime\prime},
\end{aligned}
\end{equation*}
then $\mbox{deg}(h_{r}(a_{\bf i^\prime}h^{\bf i^\prime}v+V^{\prime\prime\prime}))={\bf i^\prime}-\epsilon_r\preceq {\bf i}-\epsilon_r$,
the equality holds if and only if ${\bf i^\prime=i}$.
Therefore $\mbox{deg}(h_rv^\prime)={\bf i}-\epsilon_r$.

\vspace{3mm}
Using Claim 2 repeatedly,
from any $0\neq v^\prime\in U$
we can get $v+V^{\prime\prime\prime}$,
which gives the simplicity of $U$.
Define
\begin{equation*}
\begin{aligned}
\varphi:&U\rightarrow U_h\\
&\sum_{{\bf i}\in \mathbb{M}}a_{\bf i}h^{\bf i}v+V^{\prime\prime\prime}\longmapsto \sum_{{\bf i}\in \mathbb{M}}a_{\bf i}h^{\bf i}w,
\end{aligned}
\end{equation*}
then $\varphi$ is a $\widehat{\mathfrak{age}}(1)$-module epimorphism, and $\varphi$ is injective by $U$ is simple, hence it is an isomorphism.
\end{proof}

\subsection{Imaginary Verma modules of $\widehat{\mathfrak{age}}(1)$}

We note that $\widehat{\mathfrak{age}}(1)$ can be equipped with another triangular decomposition:
\begin{equation}
\widehat{\mathfrak{age}}(1)=\widehat{\mathfrak{age}}(1)^{[+]}\oplus\widehat{\mathfrak{age}}(1)^{[0]}\oplus\widehat{\mathfrak{age}}(1)^{[-]}=\widehat{\mathfrak{age}}(1)^{[\geq0]}\oplus\widehat{\mathfrak{age}}(1)^{[-]},
\end{equation}
where
 \begin{equation*}
\begin{aligned}
&\widehat{\mathfrak{age}}(1)^{[+]}=\coprod_{n\in \mathbb{Z}}(\mathbb{C}e_n\oplus \mathbb{C}p_n)\oplus\coprod_{n\in \mathbb{Z}_+}(\mathbb{C}h_n), \\
&\widehat{\mathfrak{age}}(1)^{[0]}=\coprod_{n\in \mathbb{Z}}\mathbb{C}z_n\oplus \mathbb{C}h\oplus\mathbb{C}k, \\
&\widehat{\mathfrak{age}}(1)^{[-]}=\coprod_{n\in \mathbb{Z}}\mathbb{C}q_n\oplus\coprod_{-n\in\mathbb{Z}_+}(\mathbb{C}h_n),\\
&\widehat{\mathfrak{age}}(1)^{[\geq0]}=\widehat{\mathfrak{age}}(1)^{[+]}\oplus\widehat{\mathfrak{age}}(1)^{[0]}.
\end{aligned}
\end{equation*}

For $\mathcal{h},\mathcal{k},\mathcal{z}_i(i\in \mathbb{Z})\in\mathbb{C}$, let $(\mathcal{z})=(\cdots,\mathcal{z}_{-1},\mathcal{z}_0,\mathcal{z}_1,\cdots)$,
we define the imaginary Verma module of $\widehat{\mathfrak{age}}(1)$ by
$$M_{IV}(\mathcal{h},\mathcal{k},(\mathcal{z}))=U(\widehat{\mathfrak{age}}(1))\otimes_{U(\widehat{\mathfrak{age}}(1)^{[\geq0]})}\mathbb{C}v,$$
where $\mathbb{C}v$ is the one dimensional $\widehat{\mathfrak{age}}(1)^{[\geq0]}$-module on which $h$ acts as $\mathcal{h}$, $k$ acts as $\mathcal{k}$, $z_i$ acts as $\mathcal{z}_i$ for $i\in\mathbb{Z}$
and $\widehat{\mathfrak{age}}(1)^{[+]}$ acts as zero. Let $(0)=(\cdots,0,0,0,\cdots)$.

For ${\bf i,j}\in \mathbb{M}$ and $d\in\mathbb{Z}$, denote
$$q_d^{\bf i}h^{\bf j}=\cdots q_{d-2}^{i_2}q_{d-1}^{i_1}\cdots h_{-2}^{j_2}h_{-1}^{j_1}\in U(\widehat{\mathfrak{age}}(1)).$$
By the PBW Theorem, for any element $v$ of $M_{IV}(\mathcal{h},\mathcal{k},(\mathcal{z}))$, there exists $d\in\mathbb{Z}$ such that $v$ can be uniquely written in the form
\begin{equation}\label{eq:idelt1}
  \sum_{{\bf i,j}\in \mathbb{M}}a_{\bf i,j}q_d^{\bf i}h^{\bf j}v,
\end{equation}
where all $a_{\bf i,j}\in\mathbb{C}$ and only finitely many of them are nonzero.
For any $v^\prime\in M_{IV}(\mathcal{h},\mathcal{k},(\mathcal{z}))$
  written in the form of (\ref{eq:idelt1}) for some $d\in \mathbb{Z}$,
  we have $\forall \,j\in\mathbb{Z}$, $p_jv^\prime$ can also be uniquely written in the form of (\ref{eq:idelt1}) for $d$.

\begin{defi}
{\em Define a {\em principal total order} on $\mathbb{M}\times\mathbb{M}$,
 still denoted by $\succ$: $\forall\ {\bf i,j,i^\prime,j^\prime} \in \mathbb{M} $,
 we say that $({\bf i,j}) \succ ({\bf i^\prime,j^\prime})$ if one of the following conditions holds.
 \\(i) ${\bf w(i)>w(i^\prime)}$.
 \\(ii) ${\bf w(i)=w(i^\prime),i\succ i^\prime}$.
 \\(iii) ${\bf i=i^\prime,j\succ j^\prime}$.}
\end{defi}

For any $v^\prime\in M_{IV}(\mathcal{h},\mathcal{k},(\mathcal{z}))$ written in the form of (\ref{eq:idelt1}) for some $d\in \mathbb{Z}$,
 we denote by $\mbox{supp}_d(v^\prime)$ the set of all $({\bf i,j)}\in \mathbb{M}\times\mathbb{M}$
 such that $a_{\bf i,j}\ne 0$.
 For a nonzero $v^\prime\in M_{IV}(\mathcal{h},\mathcal{k},(\mathcal{z}))$,
  let $\mbox{deg}_d(v^\prime)$ denote the maximal
   (with respect to the principal total order on $\mathbb{M}\times\mathbb{M}$)
   element in $\mbox{supp}_d(v^\prime)$.

\begin{lem}\label{ivmlem}
For any ${\bf i}\in \mathbb{M}, j\in\mathbb{Z}$, we have $p_jh^{\bf i}v=e_jh^{\bf i}v=0$.
\end{lem}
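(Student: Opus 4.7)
The plan is to prove both identities simultaneously by induction on $\mathbf{d(i)} = \sum_{s\in\mathbb{Z}_+} i_s$, the total number of $h_{-s}$-factors appearing in $h^{\bf i}$. Two facts will be used throughout: (i) by the very definition of $M_{IV}(\mathcal{h},\mathcal{k},(\mathcal{z}))$, every $p_j$ and $e_j$ lies in $\widehat{\mathfrak{age}}(1)^{[+]}$ and therefore annihilates $v$; and (ii) in $\widehat{\mathfrak{age}}(1)$ we have $[h_m,p_n]=p_{m+n}$ and $[h_m,e_n]=2\,e_{m+n}$ with \emph{no} central correction, since $(h\mid p)=(h\mid e)=0$ as pointed out in the remark of Section \ref{sec:2}.

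The base case $\mathbf{d(i)}=0$ reduces to $h^{\bf i}=1$, and then $p_j v = e_j v = 0$ for every $j\in\mathbb{Z}$ is exactly (i).

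For the inductive step I will fix $\bf i$ with $\mathbf{d(i)}=N\geq 1$, pick any $r$ with $i_r>0$, and use that the $h_{-s}$'s commute pairwise in $\widehat{\mathfrak{age}}(1)$ (because $[h_{-r},h_{-s}]=-r\delta_{r+s,0}k=0$ when $r,s>0$) to factor $h^{\bf i}=h_{-r}\,h^{\mathbf{i}-\epsilon_r}$, where $\mathbf{d(i-\epsilon_r)}=N-1$. The single-commutator computation
\begin{equation*}
p_j h^{\bf i} v \;=\; [p_j,h_{-r}]\,h^{\mathbf{i}-\epsilon_r}v \,+\, h_{-r}\,p_j\,h^{\mathbf{i}-\epsilon_r}v \;=\; -p_{j-r}\,h^{\mathbf{i}-\epsilon_r}v \,+\, h_{-r}\,p_j\,h^{\mathbf{i}-\epsilon_r}v
\end{equation*}
then reduces both summands on the right to the induction hypothesis applied at $\mathbf{i}-\epsilon_r$: the first with the shifted index $j-r$, the second with the original $j$. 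The argument for $e_j h^{\bf i} v$ is entirely parallel, with $[e_j,h_{-r}]=-2\,e_{j-r}$ replacing $[p_j,h_{-r}]=-p_{j-r}$.

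I do not anticipate a genuine obstacle. The only point of care is confirming that sliding $p_j$ or $e_j$ past $h_{-r}$ produces no central contribution, which is exactly the content of the vanishing of the invariant form on $(h,p)$ and $(h,e)$; once that is in hand, the induction closes in one line.
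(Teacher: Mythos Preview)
Your proof is correct and follows essentially the same induction on $\mathbf{d(i)}$ as the paper. The only cosmetic difference is that the paper expands the full commutator $[p_j,h^{\bf i}]$ as a sum over all positions, whereas you peel off a single $h_{-r}$ factor (using that the $h_{-s}$ commute among themselves) and invoke the hypothesis twice; both routes close the induction identically.
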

\begin{proof}
We prove $p_jh^{\bf i}v=0$ for any ${\bf i}\in \mathbb{M}, j\in\mathbb{Z}$ by induction on ${\bf d(i)}$.

For ${\bf d(i)}=0$, i.e., ${\bf i}={\bf 0}$, then for all $j\in\mathbb{Z}$, we have $p_jh^{\bf i}v=p_jv=0$.
For ${\bf d(i)}=1$, $h^{\bf i}v=h_sv$ for some $s\in-\mathbb{Z}_+$, then for all $j\in\mathbb{Z}$, we have $p_jh_sv=[p_j,h_s]v=-p_{j+s}v=0$.

Suppose $p_jh^{\bf i}v=0$ for all ${\bf i}\in \mathbb{M}$ with ${\bf d(i)}<r$ ($r\in\mathbb{Z}_+$) and $j\in\mathbb{Z}$.
Then for ${\bf d(i)}=r$, $j\in\mathbb{Z}$, we have
$$p_jh^{\bf i}v=[p_j,h^{\bf i}]v=\sum h^{\bf i^*}[p_j,h_{-s}]h^{{\bf i-i^*}-\epsilon_s}v=-\sum h^{\bf i^*}p_{j-s}h^{{\bf i-i^*}-\epsilon_s}v=0, $$
where ${\bf i^*}\in\mathbb{M}$ such that $i_c^*=i_c$ for $c>s$ and $i_c^*=0$ for $c\leq s$.

Similarly, for any ${\bf i}\in \mathbb{M}, j\in\mathbb{Z}$, $e_jh^{\bf i}v=0$.
\end{proof}

Then we get the main result of this subsection.

\begin{thm}\label{ivmthm}
For $\mathcal{h},\mathcal{k},\mathcal{z}_i(i\in \mathbb{Z})\in\mathbb{C}$. Suppose $\mathcal{k}\ne0$.\\
{\rm (1)} If there exists $r\in\mathbb{Z}$ such that $\mathcal{z}_r\ne0,\mathcal{z}_{r+i}=0\,(i\in\mathbb{Z}_+)$,
then $M_{IV}(\mathcal{h},\mathcal{k},(\mathcal{z}))$ is simple.\\
{\rm (2)} If there exists $r\in\mathbb{Z}$ such that $\mathcal{z}_r\ne0,\mathcal{z}_{r-i}=0\,(i\in\mathbb{Z}_+)$,
then $M_{IV}(\mathcal{h},\mathcal{k},(\mathcal{z}))$ is simple.
\end{thm}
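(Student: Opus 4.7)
I would prove $M_{IV}(\mathcal{h},\mathcal{k},(\mathcal{z}))$ is simple by showing that every nonzero $v^\prime$ generates the whole module, i.e., $v \in U(\widehat{\mathfrak{age}}(1)) v^\prime$. Write $v^\prime$ in PBW form \eqref{eq:idelt1} for some $d$, let $(\mathbf{i}_0,\mathbf{j}_0) = \mbox{deg}_d(v^\prime)$, and proceed in two stages. The first stage eliminates all $q$-variables by repeatedly applying suitable operators $p_j$, reducing $v^\prime$ to a vector of the form $v^{\prime\prime} = \sum_\mathbf{j} b_\mathbf{j} h^{\mathbf{j}} v$. The second stage then reduces $v^{\prime\prime}$ to a nonzero scalar multiple of $v$ by applying operators $h_{r^\prime}$ with $r^\prime > 0$, using $\mathcal{k} \neq 0$, in complete parallel to Claim 2 of Theorem \ref{vermathm}.

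For the first stage, commuting $p_j$ past $q_d^{\mathbf{i}}$ via $[p_j,q_n] = z_{j+n}$ and using Lemma \ref{ivmlem} (which gives $p_j h^{\mathbf{j}} v = 0$) yields
\[ p_j\, q_d^{\mathbf{i}} h^{\mathbf{j}} v = \sum_{s \geq 1} i_s\, \mathcal{z}_{j+d-s}\, q_d^{\mathbf{i}-\epsilon_s} h^{\mathbf{j}} v. \]
Assuming $\mathbf{w}(\mathbf{i}_0) > 0$, I would set $\tilde{s} = \max\{s : i_s > 0 \text{ for some } (\mathbf{i},\mathbf{j}) \in \mbox{supp}_d(v^\prime)\}$ in case (1), and $\tilde{s} = \min$ of the same set in case (2), then apply $p_j$ with $j = r - d + \tilde{s}$. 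By the definition of $\tilde{s}$, no $i_s$ appearing in $\mbox{supp}_d(v^\prime)$ is nonzero outside the permitted side of $\tilde{s}$; on the permitted side, the index $j+d-s = r+(\tilde{s}-s)$ with $s \neq \tilde{s}$ falls in the vanishing tail prescribed by the hypothesis on $(\mathcal{z})$. Thus only $s = \tilde{s}$ survives:
\[ p_j v^\prime = \mathcal{z}_r \sum_{\substack{(\mathbf{i},\mathbf{j}) \in \mbox{supp}_d(v^\prime) \\ i_{\tilde{s}} > 0}} a_{\mathbf{i},\mathbf{j}}\, i_{\tilde{s}}\, q_d^{\mathbf{i}-\epsilon_{\tilde{s}}} h^{\mathbf{j}} v, \]
a sum of distinct PBW basis vectors with nonzero coefficients (the indexing set is nonempty by the very definition of $\tilde{s}$). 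The map $(\mathbf{i},\mathbf{j}) \mapsto (\mathbf{i}-\epsilon_{\tilde{s}},\mathbf{j})$ preserves the principal total order (subtracting a fixed $\epsilon_{\tilde{s}}$ respects $\mathbf{w}$, reverse-lex on $\mathbf{i}$, and equality of $\mathbf{j}$), so the new $\mbox{deg}_d$ has strictly smaller $\mathbf{w}$-weight. Iterating finitely often produces the desired $v^{\prime\prime}$.

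For the second stage, if $v^{\prime\prime}$ is not already a multiple of $v$, let $\mathbf{j}_0^\prime$ be the reverse-lex maximum of $\mbox{supp}(v^{\prime\prime})$ and $r^\prime = \min\{s : (\mathbf{j}_0^\prime)_s > 0\}$. Using $[h_{r^\prime}, h_{-s}] = r^\prime \delta_{r^\prime,s} k$ and $\mathcal{k} \neq 0$, one obtains $h_{r^\prime} h^{\mathbf{j}} v = r^\prime \mathcal{k} j_{r^\prime} h^{\mathbf{j}-\epsilon_{r^\prime}} v$, and the argument of Claim 2 in Theorem \ref{vermathm} shows $\mbox{deg}(h_{r^\prime} v^{\prime\prime}) = \mathbf{j}_0^\prime - \epsilon_{r^\prime} \prec \mathbf{j}_0^\prime$. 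Finitely many iterations give a nonzero scalar multiple of $v$, proving simplicity.

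\emph{The main obstacle} lies in the choice of $\tilde{s}$. A naive choice such as $\tilde{s} = s^*(\mathbf{i}_0)$, drawn from the top term alone, fails in case (2): a lower-principal-order term $\mathbf{i}$ in $\mbox{supp}_d(v^\prime)$ can then have $i_s > 0$ for some $s < \tilde{s}$ with $\mathcal{z}_{r+(\tilde{s}-s)}$ not forced to vanish by the case-(2) hypothesis, giving a contribution that overshoots the intended new top. Taking $\tilde{s}$ to be the global extremum over the whole support sidesteps this: the $i_s = 0$ vanishing on one side and the $(\mathcal{z})$-vanishing on the other combine to collapse the formula to a single $s$, ensuring a clean decrease of the principal degree in both cases (1) and (2).
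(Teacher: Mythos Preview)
Your proposal is correct and the two-stage plan matches the paper's: strip the $q$'s with suitable $p_j$'s (using Lemma~\ref{ivmlem} and the hypothesis on $(\mathcal{z})$), then strip the $h$'s using $\mathcal{k}\neq 0$. The second stage is identical to the paper's Claim~2.

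The first stage, however, is executed differently. The paper chooses its operator from the \emph{leading term alone}: with $(\mathbf{i},\mathbf{j})=\deg_d(v')$ it sets $a=\min\{s:i_s\neq 0\}$ and applies $p_{a-d+r}$, then argues case-by-case that every contribution from every $(\mathbf{i}',\mathbf{j}')\in\mbox{supp}_d(v')$ and every $s\ge a$ has principal degree $\preceq(\mathbf{i}-\epsilon_a,\mathbf{j})$, with equality only for the top term. For case~(2) this analysis would indeed break down in the original PBW ordering (as you correctly note), and the paper's remedy is to switch to the mirror basis $q_d^{\mathbf{i}}=\cdots q_{d+2}^{i_2}q_{d+1}^{i_1}$ and rerun the same argument. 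Your remedy is instead to take $\tilde{s}$ as a global extremum over the whole support, which collapses the commutator sum to a single surviving index $s=\tilde{s}$; this dispenses with the degree-by-degree comparison and handles (1) and (2) uniformly in one basis. What you gain is a cleaner bookkeeping and a unified treatment; what you lose is the precise tracking $\deg_d(p_jv')=(\mathbf{i}-\epsilon_a,\mathbf{j})$ that the paper obtains, but for simplicity of $M_{IV}$ only the strict drop in the $\mathbf{w}$-weight is needed, and your injectivity remark (distinct surviving $(\mathbf{i},\mathbf{j})$ map to distinct $(\mathbf{i}-\epsilon_{\tilde{s}},\mathbf{j})$) guarantees nonvanishing.
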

\begin{proof}
(1) For any $v^\prime\in M_{IV}(\mathcal{h},\mathcal{k},(\mathcal{z}))$ written in the form of (\ref{eq:idelt1}) for some $d\in \mathbb{Z}$,
denote $\mbox{deg}_d(v^\prime)=({\bf i,j}).$

\vspace{3mm}
\noindent{\bf Claim 1:} If ${\bf i}\ne{\bf 0}$, let $a={\rm min}\{s:i_s \ne 0\}>0$, then $\mbox{deg}_d(p_{a-d+r}v^\prime)=({\bf i}-\epsilon_a,{\bf j})$.
\vspace{0.1mm}

It suffices to consider those $({\bf i^\prime,j^\prime})\in \mbox{supp}_d(v^\prime)$ such that $a_{\bf i^\prime,j^\prime}p_{a-d+r}q_t^{\bf i^\prime}h^{\bf j^\prime}v\ne0$.
By Lemma \ref{ivmlem}, we have
$p_{a-d+r}q_d^{\bf i^\prime}h^{\bf j^\prime}v=[p_{a-d+r},q_d^{\bf i^\prime}]h^{\bf j^\prime}v.$

Consider
$q_d^{\bf i^*}[p_{a-d+r},q_{d-s}]q_d^{{\bf i^\prime-i^*}-\epsilon_s}h^{\bf j^\prime}v=q_d^{{\bf i^\prime}-\epsilon_s}h^{\bf j^\prime}z_{a-s+r}v,$
where ${\bf i^*}\in\mathbb{M}$ such that $i_c^*=i_c$ for $c>s$ and $i_c^*=0$ for $c\leq s$.
Since $\mathcal{z}_r\ne0,\mathcal{z}_{r+i}=0\,(i\in\mathbb{Z}_+)$,
then for $s<a$, $q_d^{{\bf i^\prime}-\epsilon_s}h^{\bf j^\prime}z_{a-s+r}v$ is zero; for $s\geq a$,
$$\mbox{deg}_d(q_d^{{\bf i^\prime}-\epsilon_s}h^{\bf j^\prime}z_{a-s+r}v)=({\bf i^\prime}-\epsilon_s,{\bf j^\prime}).$$
If ${\bf w(i)>w(i^\prime)}$ or $s>a$, then
${\bf w(i^\prime}-\epsilon_s)={\bf w(i^\prime)}-s<{\bf w(i)}-a={\bf w(i}-\epsilon_a),$
hence $({\bf i^\prime}-\epsilon_s,{\bf j^\prime})\prec ({\bf i}-\epsilon_a,{\bf j})$.
If ${\bf w(i)=w(i^\prime)}$ and $s=a$, then ${\bf i^\prime\preceq i}$,
hence $({\bf i^\prime}-\epsilon_a,{\bf j^\prime})\preceq ({\bf i}-\epsilon_a,{\bf j})$,
the equality holds if and only if ${\bf i^\prime=i, j^\prime=j}$.

Therefore $\mbox{deg}_d(p_{a-d+r}v^\prime)=({\bf i}-\epsilon_a,{\bf j})$.

\vspace{3mm}
\noindent{\bf Claim 2:} If ${\bf i=0},{\bf j}\ne{\bf 0}$, let $b={\rm min}\{s:j_s \ne 0\}>0$, then $\mbox{deg}_d(h_{b}v^\prime)=({\bf 0},{\bf j}-\epsilon_b)$.
\vspace{3mm}

It suffices to consider those $({\bf 0,j^\prime})\in \mbox{supp}_d(v^\prime)$ such that $a_{\bf 0,j^\prime}h_{b}h^{\bf j^\prime}v\ne0$,
then $j_{b}^\prime\ne0$.
Since we have
 \begin{equation*}
\begin{aligned}
h_{b}h^{\bf j^\prime}v=[h_{b},h^{\bf j^\prime}]v=h^{{\bf j^\prime}-\epsilon_b}[h_{b},h_{-b}]v=
h^{{\bf j^\prime}-\epsilon_b}bkv=b\mathcal{k}h^{{\bf j^\prime}-\epsilon_b}v,
\end{aligned}
\end{equation*}
then $\mbox{deg}_d(h_{b}h^{\bf j^\prime}v)=({\bf 0,j^\prime}-\epsilon_b)\preceq ({\bf 0,j}-\epsilon_b)$,
the equality holds if and only if ${\bf j^\prime=j}$.
Therefore $\mbox{deg}_d(h_{b}v^\prime)=({\bf 0},{\bf j}-\epsilon_b)$.

\vspace{3mm}
From any $0\neq v^\prime\in M_{IV}(\mathcal{h},\mathcal{k},(\mathcal{z}))$,
use Claim 1 and Claim 2 repeatedly,
we can get $v$,
which gives the simplicity of $M_{IV}(\mathcal{h},\mathcal{k},(\mathcal{z}))$.

\vspace{3mm}
(2) We change the meaning of ``$q_d^{\bf i}h^{\bf j}$''.
 For ${\bf i,j}\in \mathbb{M}$ and $d\in\mathbb{Z}$, denote
$$q_d^{\bf i}h^{\bf j}=\cdots q_{d+2}^{i_2}q_{d+1}^{i_1}\cdots h_{-2}^{j_2}h_{-1}^{j_1}\in U(\widehat{\mathfrak{age}}(1)).$$
Then the proof of (2) is similar to (1).
\end{proof}

\begin{thm}
For $\mathcal{h},\mathcal{k},\mathcal{z}_i(i\in \mathbb{Z})\in\mathbb{C}$.\\
{\rm (1)} If $\mathcal{k}=0$ and there exists $r\in\mathbb{Z}$ such that $\mathcal{z}_r\ne0,\mathcal{z}_{r+i}=0 (\mbox{or}\,\, \mathcal{z}_{r-i}=0)\,(i\in\mathbb{Z}_+)$, the irreducible quotient of $M_{IV}(\mathcal{h},\mathcal{k},(\mathcal{z}))$ is isomorphic to the $\widehat{\mathfrak{age}}(1)$-module $U_q$ with generator $w$ and relations $\alpha_iw=0$ $(\alpha=e,p$ and $i\in\mathbb{Z})$, $h_iw=0$ $(i\in\mathbb{Z}\setminus \{0\})$, $hw=\mathcal{h}w, kw=\mathcal{k}w, z_iw=\mathcal{z}_iw (i\in \mathbb{Z})$.\\
{\rm (2)} If $\mathcal{k}\ne0$ and $(\mathcal{z})=(0)$, the irreducible quotient of $M_{IV}(\mathcal{h},\mathcal{k},(\mathcal{z}))$ is isomorphic to the $\widehat{\mathfrak{age}}(1)$-module $U_h$ with generator $w$ and relations $\alpha_iw=0$ $(\alpha=e,p,q,z$ and $i\in\mathbb{Z})$, $h_iw=0$ $(i\in\mathbb{Z}_+)$, $hw=\mathcal{h}w, kw=\mathcal{k}w$.\\
{\rm (3)} If $\mathcal{k}=0$ and $(\mathcal{z})=(0)$, the irreducible quotient of $M_{IV}(\mathcal{h},\mathcal{k},(\mathcal{z}))$ is isomorphic to the one-dimensional $\widehat{\mathfrak{age}}(1)$-module $\mathbb{C}w$.\\

\end{thm}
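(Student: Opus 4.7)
The plan is to handle each of the three cases by identifying a suitable family of primitive-type vectors in $M_{IV}(\mathcal{h},\mathcal{k},(\mathcal{z}))$, generating a proper submodule $N$ by them, and identifying the simple quotient $M_{IV}/N$ with the claimed module via the universal property. The arguments closely parallel those in Theorem \ref{vermathm} and Theorem \ref{ivmthm}, with Lemma \ref{ivmlem} used to simplify the $\widehat{\mathfrak{age}}(1)^{[+]}$-action on relevant monomials.

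For part (1), the hypothesis $\mathcal{k}=0$ gives $h_j h_{-i}v = j\mathcal{k}\delta_{j,i}v = 0$ for $j\in\mathbb{Z}_+$, and Lemma \ref{ivmlem} together with $e_jv = p_jv = 0$ yields $e_j h_{-i}v = -2e_{j-i}v = 0$ and $p_j h_{-i}v = -p_{j-i}v = 0$. Hence each $h_{-i}v$ ($i\in\mathbb{Z}_+$) is annihilated by $\widehat{\mathfrak{age}}(1)^{[+]}$. Let $N_1$ denote the submodule they generate; then $v\notin N_1$, and by PBW the quotient $M_{IV}/N_1$ is spanned by $\{q_d^{\bf i}(v+N_1):{\bf i}\in\mathbb{M},\ d\in\mathbb{Z}\}$. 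The simplicity of $M_{IV}/N_1$ is established by the same degree-lowering argument as Claim 1 of Theorem \ref{ivmthm}: using the hypothesis on $(\mathcal{z})$, applying $p_{a-d+r}$ (or the mirror operator in the case $\mathcal{z}_{r-i}=0$, corresponding to the reindexing in Theorem \ref{ivmthm}(2)) strictly decreases $\mbox{deg}_d$ of any nonzero element in the principal order until only a scalar multiple of $v+N_1$ remains. The universal property then yields a surjection $M_{IV}/N_1\to U_q$ sending $v+N_1\mapsto w$, which is an isomorphism by the simplicity just established. This is the main technical case; the delicate point is verifying that after collapsing the $h$-part of the PBW basis modulo $N_1$, the $z$-commutator bookkeeping of the Theorem \ref{ivmthm} argument transfers faithfully to the quotient, with no surviving cross-terms.

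For part (2), the hypothesis $(\mathcal{z})=(0)$ gives $p_j q_iv = \mathcal{z}_{i+j}v = 0$, together with $e_j q_iv = p_{i+j}v = 0$ (via Lemma \ref{ivmlem}) and $h_j q_iv = -q_{j+i}v$ for $j\in\mathbb{Z}_+$. Hence the submodule $N_2$ generated by $\{q_iv : i\in\mathbb{Z}\}$ contains $\widehat{\mathfrak{age}}(1)^{[+]}q_iv$ for every $i$ and does not contain $v$. The quotient $M_{IV}/N_2$ is spanned by $\{h^{\bf j}(v+N_2):{\bf j}\in\mathbb{M}\}$, whose simplicity follows from the degree-lowering argument using $h_r$ with $\mathcal{k}\ne 0$, exactly as in the proof of Theorem \ref{vermathm}(2) (equivalently, Claim 2 of Theorem \ref{ivmthm}). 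The universal property then yields $M_{IV}/N_2 \cong U_h$. Part (3) combines both ingredients: since $\mathcal{k}=0$ and $(\mathcal{z})=(0)$ both hold, the computations above show $h_{-i}v$ and $q_iv$ are all primitive simultaneously, and the submodule $N_3$ they jointly generate has one-dimensional quotient isomorphic to $\mathbb{C}w$, which is trivially simple. Parts (2) and (3) are comparatively mechanical adaptations of arguments already worked out in the preceding theorems.
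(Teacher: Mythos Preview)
Your proposal is correct and follows essentially the same route as the paper's proof. The only cosmetic difference is in the choice of generators for the proper submodules: in part (1) you generate by the vectors $h_{-i}v$ while the paper generates by all $h^{\bf i}v$ with ${\bf i}\neq\mathbf{0}$, and in part (2) you generate by the vectors $q_iv$ while the paper generates by all PBW monomials of positive $q$-degree; in each case the two generating sets produce the same submodule, and the identification of the simple quotient then proceeds identically via Claim~1 and Claim~2 of Theorem~\ref{ivmthm} together with the universal property.
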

\begin{proof}
(1) By Lemma \ref{ivmlem} and $\mathcal{k}=0$,
for any ${\bf i}\in \mathbb{M}$, we have $\widehat{\mathfrak{age}}(1)^{[+]}h^{\bf i}v=0$.
Hence for any ${\bf 0\ne i}\in \mathbb{M}$, the submodule of $M_{IV}(\mathcal{h},\mathcal{k},(\mathcal{z}))$ generated by $h^{\bf i}v$ is proper.
Let $V^{\prime}$ be the submodule of $M_{IV}(\mathcal{h},\mathcal{k},(\mathcal{z}))$ generated by all $h^{\bf i}v\, ({\bf 0\ne i}\in \mathbb{M})$.

We consider the quotient module $U=M_{IV}(\mathcal{h},\mathcal{k},(\mathcal{z}))/V^{\prime}$,
it is similar to the Claim 1 of Theorem \ref{ivmthm},  we can get $U$ is simple.
Define
\begin{equation*}
\begin{aligned}
\varphi:&U\rightarrow U_q\\
&\sum_{{\bf i}\in \mathbb{M}}a_{\bf i,0}q_d^{\bf i}v+V^{\prime}\longmapsto \sum_{{\bf i}\in \mathbb{M}}a_{\bf i,0}q_d^{\bf i}w,
\end{aligned}
\end{equation*}
then $\varphi$ is a $\widehat{\mathfrak{age}}(1)$-module epimorphism, and $\varphi$ is injective since $U$ is simple, hence it is an isomorphism.

\vspace{3mm}
(2) By the PBW Theorem,
$$I=\{\cdots q_{-1}^{i_{-1}}q_{0}^{i_{0}}q_{1}^{i_{1}}\cdots\cdots h_{-2}^{j_{2}}h_{-1}^{j_{1}}v|i_r,j_s\in\mathbb{N},r\in\mathbb{Z},s\in\mathbb{Z}_+\}$$
forms a basis of $M_{IV}(\mathcal{h},\mathcal{k},(\mathcal{z}))$.

For any $v^\prime\in I$, denote $d(v^\prime)=\sum_{r\in \mathbb{Z}}{i_r}$ be the degree of $v^\prime\in I$.
For $v^{\prime\prime} \in M_{IV}(\mathcal{h},\mathcal{k},(\mathcal{z}))$, if $v^{\prime\prime}$ is a linear combination of the vectors in $I$ which have same degree $n\in\mathbb{N}$,
we denote $d(v^{\prime\prime})=n$.
Then $$d(q_sv^\prime)=d(v^\prime)+1,d(z_sv^\prime)=d(h_sv^\prime)=d(kv^\prime)=d(v^\prime)$$ for all $s\in\mathbb{Z}$.
By Lemma \ref{ivmlem} and $(\mathcal{z})=(0)$, we have $e_sv^\prime=p_sv^\prime=0$ for all $s\in\mathbb{Z}$.

Let $V^{\prime\prime}$ be the submodule of $M_{IV}(\mathcal{h},\mathcal{k},(\mathcal{z}))$ generated by all $v^\prime\in I$ with $d(v^\prime)>0$.
Then $V^{\prime\prime}$ doesn't contain $v$, hence is proper.
We consider the quotient module $U^\prime=M_{IV}(\mathcal{h},\mathcal{k},(\mathcal{z}))/V^{\prime\prime}$,
it is similar to the Claim 2 of Theorem \ref{ivmthm},  we can get $U^\prime$ is simple.
Define
\begin{equation*}
\begin{aligned}
\varphi^\prime:&U^\prime\rightarrow U_h\\
&\sum_{{\bf i}\in \mathbb{M}}a_{\bf i}h^{\bf i}v+V^{\prime\prime}\longmapsto \sum_{{\bf i}\in \mathbb{M}}a_{\bf i}h^{\bf i}w,
\end{aligned}
\end{equation*}
then $\varphi^\prime$ is a $\widehat{\mathfrak{age}}(1)$-module epimorphism, and $\varphi^\prime$ is injective since $U^\prime$ is simple, hence it is an isomorphism.

\vspace{3mm}
{\rm (3)} From (1) and (2), since $\mathcal{k}=0$ and $(\mathcal{z})=(0)$, then the simple quotient of $M_{IV}(\mathcal{h},\mathcal{k},(\mathcal{z}))$ is isomorphic to the one-dimensional $\widehat{\mathfrak{age}}(1)$-module $\mathbb{C}w$.
\end{proof}

\section{Simple restricted modules of $\widehat{\mathfrak{age}}(1)$}
\label{sec:4}
	\def\theequation{4.\arabic{equation}}
	\setcounter{equation}{0}

In this section, we present certain construction
of simple restricted $\widehat{\mathfrak{age}}(1)$-modules, then we give an explicit example
of this kind of simple restricted $\widehat{\mathfrak{age}}(1)$-module via the standard Whittaker module of $\widehat{\mathfrak{age}}(1)$.
First we give the definition of restricted modules of $\widehat{\mathfrak{age}}(1)$.
\begin{defi}
{\em An $\widehat{\mathfrak{age}}(1)$-module $W$ is called {\em restricted} if for any $w\in W$,
$e_{m}w=p_{m}w=q_{m}w=h_{m}w=z_{m}w=0$ for $m$ sufficiently large.}
\end{defi}

\subsection{Construction of simple restricted $\widehat{\mathfrak{age}}(1)$-modules}

Let $d_1,d_2,d_3\in \mathbb{Z}$, consider the subalgebra
\begin{equation}
\widehat{\mathfrak{age}}(1)_{(d_1,d_2,d_3)}=\sum_{i\in\mathbb{N}}(\mathbb{C}h_i\oplus\mathbb{C}e_{d_1+i}\oplus\mathbb{C}p_{d_2+i}\oplus\mathbb{C}q_{d_3+i})\oplus\sum_{i\in\mathbb{Z}}\mathbb{C}z_i\oplus\mathbb{C}k,
\end{equation}
then we have $d_1+d_3\geq d_2$.

For any simple $\widehat{\mathfrak{age}}(1)_{(d_1,d_2,d_3)}$-module $V$, we have the induced $\widehat{\mathfrak{age}}(1)$-module
$$\mbox{Ind}_{\widehat{\mathfrak{age}}(1)_{(d_1,d_2,d_3)}}^{\widehat{\mathfrak{age}}(1)}(V)=U(\widehat{\mathfrak{age}}(1))\otimes_{U(\widehat{\mathfrak{age}}(1)_{(d_1,d_2,d_3)})}V.$$

\begin{thm}\label{thm1}
Let $d_1,d_2,d_3\in \mathbb{Z}$ with $d_1+d_3\geq d_2$ and $V$ be a simple $\widehat{\mathfrak{age}}(1)_{(d_1,d_2,d_3)}$-module.
If there exist $l\in \mathbb{Z}_{> d_2}$ such that
\\{\rm (a)} $p_l$ acts injectively on $V$,
\\{\rm (b)} $p_{l+i}V=z_{l+d_2-d_1+i-1}V=e_{l-d_3+i}V=q_{l-d_1+i}V=h_{l-d_2+i}V=0$ for all $i\in \mathbb{Z}_+$.
\\Then $\mbox{Ind}_{\widehat{\mathfrak{age}}(1)_{(d_1,d_2,d_3)}}^{\widehat{\mathfrak{age}}(1)}(V)$ is a simple restricted $\widehat{\mathfrak{age}}(1)$-module.
\end{thm}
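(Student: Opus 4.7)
The plan is to prove that $\mathrm{Ind}(V):=\mathrm{Ind}_{\widehat{\mathfrak{age}}(1)_{(d_1,d_2,d_3)}}^{\widehat{\mathfrak{age}}(1)}(V)$ is restricted and simple separately. By the PBW theorem applied to the direct sum decomposition of $\widehat{\mathfrak{age}}(1)$ into $\widehat{\mathfrak{age}}(1)_{(d_1,d_2,d_3)}$ and its complement, $\mathrm{Ind}(V)$ has a basis consisting of monomials $h^{\bf a}e^{\bf b}p^{\bf c}q^{\bf d}v$ in which the factors are $h_{-a},e_{d_1-b},p_{d_2-c},q_{d_3-e}$ with $a,b,c,e\in\mathbb{Z}_+$, the multi-indices ${\bf a,b,c,d}$ lie in $\mathbb{M}$, and $v$ runs over a basis of $V$.

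For restrictedness, I would write an arbitrary $w$ in this PBW basis, which is a finite sum, and for each $\alpha\in\{e,p,q,h,z\}$ commute $\alpha_m$ through every factor of every summand. Each commutator produces either a shifted operator in $\widehat{\mathfrak{age}}(1)$ or a central element, and after finitely many such steps the resulting finite sum ends with operators of the form $\alpha'_{m'}$ acting on $v$, whose indices $m'$ are bounded below by $m$ minus the maximal index appearing in the finite support of $w$. For $m$ sufficiently large, all such $m'$ lie in the vanishing regime imposed by hypothesis (b), so $\alpha_m w=0$.

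For simplicity, I would equip the set of complement monomials with a principal total order refining the reverse lexicographic order $\succ$ from Section \ref{sec:2}, for example ordering $({\bf a,b,c,d})$ primarily by total degree, then by ${\bf d}$, ${\bf c}$, ${\bf b}$, ${\bf a}$ each with $\succ$. Given $0\neq w\in\mathrm{Ind}(V)$, pick out the maximal summand $X_0 v_0$ in its PBW expansion. The heart of the argument is a reduction lemma: there exists $u\in U(\widehat{\mathfrak{age}}(1))$, built from elements $p_m,e_m,q_m,h_m$ for carefully chosen large $m$ (each annihilating $V$ by (b)), such that $uw$ is nonzero and its leading complement monomial is strictly smaller than $X_0$. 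The relevant brackets $[p_m,q_{d_3-e}]=z_{m+d_3-e}$, $[p_m,h_{-a}]=-p_{m-a}$, $[e_m,q_{d_3-e}]=p_{m+d_3-e}$, $[q_m,p_{d_2-c}]=-z_{m+d_2-c}$, $[q_m,e_{d_1-b}]=-p_{m+d_1-b}$ and $[h_m,h_{-a}]=mk\delta_{m,a}$ allow, by tuning $m$, one factor of $X_0$ to be eliminated while producing at the top level a scalar on $v_0$ that is nonzero, ultimately traceable to the injectivity of $p_l$ in hypothesis (a). Iterating reduces $w$ to a nonzero element of $V\subset\mathrm{Ind}(V)$, and simplicity of $V$ as a $\widehat{\mathfrak{age}}(1)_{(d_1,d_2,d_3)}$-module together with PBW then gives $U(\widehat{\mathfrak{age}}(1))w=\mathrm{Ind}(V)$.

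The main technical obstacle is engineering this reduction so that the leading coefficient of $uw$ is manifestly nonzero. Each commutator reintroduces a factor of lower order (for instance $[p_m,h_{-a}]=-p_{m-a}$ re-inserts a $p$-factor that must be returned to PBW form), and extra terms of comparable order coming from simultaneous commutators with several factors of $X_0$ have to be tracked and shown to lie strictly below the new leading monomial. Assumption (a) is exactly what prevents the scalar action on $v_0$ from collapsing to zero in this process, while the order on monomials must be chosen fine enough to record the strict decrease at each step. Setting up this book-keeping is where the bulk of the work lies; once in place the remaining induction is routine.
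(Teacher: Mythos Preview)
Your proposal follows essentially the same strategy as the paper: write elements in a PBW basis indexed by $\mathbb{M}^4$, impose a principal total order, and prove a reduction lemma showing that a well-chosen operator strictly lowers the degree while producing, at the top, an action of $p_l$ on the $V$-coefficient. The restrictedness argument is also the same.

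Two points of comparison are worth recording. First, the paper's PBW ordering is $p^{\bf i}e^{\bf j}q^{\bf m}h^{\bf n}$ (not $h^{\bf a}e^{\bf b}p^{\bf c}q^{\bf d}$), and its total order is \emph{not} ``total degree first'': it compares ${\bf w(m+n)}$, then ${\bf n}$, then ${\bf m}$, and only afterward ${\bf w(i+j)}$, ${\bf j}$, ${\bf i}$. This pairing of $(q,h)$ and $(p,e)$ is tailored to the bracket structure, so that $p_{l+r}$ (commuting with $p,e$) cleanly reduces the $h$-part, $e_{a+l-d_3}$ reduces the $q$-part, $q_{b-d_1+l}$ reduces the $e$-part, and $h_{c-d_2+l}$ reduces the $p$-part, with the leading contribution in each case ending in $p_l v_{\bf i,j,m,n}$. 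Your ``total degree first, then ${\bf d},{\bf c},{\bf b},{\bf a}$'' ordering may also work, but the verification is not identical and the side terms behave differently; the paper's choice is what makes the four cases of the reduction lemma short.

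Second, the bracket $[h_m,h_{-a}]=m\delta_{m,a}k$ that you list is not used in the paper and cannot serve as the final reduction, since $k\neq 0$ is not among the hypotheses; only the injectivity of $p_l$ is available. The paper eliminates $p$-factors via $[h_{c-d_2+l},p_{d_2-c}]=p_l$, so that every one of the four reduction steps terminates in $p_l$ acting on $V$. Your remark that nonvanishing is ``ultimately traceable to the injectivity of $p_l$'' is correct, but make sure your actual choice of operators realises this at every step.
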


Before giving the proof of Theorem \ref{thm1}, we introduce some notions and notations that will be used later.
For ${\bf i,j,m,n}\in \mathbb{M}$, denote
$$p^{\bf i}e^{\bf j}q^{\bf m}h^{\bf n}=\cdots p_{-2+d_2}^{i_2}p_{-1+d_2}^{i_1}\cdots e_{-2+d_1}^{j_2}e_{-1+d_1}^{j_1}\cdots q_{-2+d_3}^{m_2}q_{-1+d_3}^{m_1}\cdots h_{-2}^{n_2}h_{-1}^{n_1}\in U(\widehat{\mathfrak{age}}(1)).$$
By the PBW Theorem, each element of $\mbox{Ind}_{\widehat{\mathfrak{age}}(1)_{(d_1,d_2,d_3)}}^{\widehat{\mathfrak{age}}(1)}(V)$ can be uniquely written in the form
\begin{equation}\label{eq:idelt}
  \sum_{{\bf i,j,m,n}\in \mathbb{M}}p^{\bf i}e^{\bf j}q^{\bf m}h^{\bf n}v_{\bf i,j,m,n},
\end{equation}
where all $v_{\bf i,j,m,n}\in V$ and only finitely many of them are nonzero.

\begin{defi}
{\em Define a {\em principal total order} on $\mathbb{M}\times\mathbb{M}\times\mathbb{M}\times\mathbb{M}$,
 still denoted by $\succ$: $\forall\ {\bf i,j,m,n,i^\prime,j^\prime,m^\prime,n^\prime} \in \mathbb{M} $,
 we say that $({\bf i,j,m,n}) \succ ({\bf i^\prime,j^\prime,m^\prime,n^\prime})$ if one of the following conditions holds.
 \\(i) ${\bf w(m+n)>w(m^\prime+n^\prime)}$.
 \\(ii) ${\bf w(m+n)=w(m^\prime+n^\prime),n\succ n^\prime}$.
 \\(iii) ${\bf w(m+n)=w(m^\prime+n^\prime),n=n^\prime,m\succ m^\prime}$.
 \\(iv) ${\bf n=n^\prime,m=m^\prime,w(i+j)>w(i^\prime+j^\prime)}$.
 \\(v) ${\bf n=n^\prime,m=m^\prime,w(i+j)=w(i^\prime+j^\prime),j\succ j^\prime}$.
 \\(vi) ${\bf n=n^\prime,m=m^\prime,w(i+j)=w(i^\prime+j^\prime),j=j^\prime,i\succ i^\prime}$.}
\end{defi}

For any $v\in\mbox{Ind}_{\widehat{\mathfrak{age}}(1)_{(d_1,d_2,d_3)}}^{\widehat{\mathfrak{age}}(1)}(V)$ written in the form of (\ref{eq:idelt}),
 we denote by $\mbox{supp}(v)$ the set of all $({\bf i,j,m,n)}\in \mathbb{M}\times\mathbb{M}\times\mathbb{M}\times\mathbb{M}$
 such that $v_{\bf i,j,m,n}\ne 0$.
 For a nonzero $v\in \mbox{Ind}_{\widehat{\mathfrak{age}}(1)_{(d_1,d_2,d_3)}}^{\widehat{\mathfrak{age}}(1)}(V)$,
  let $\mbox{deg}(v)$ denote the maximal
   (with respect to the principal total order on $\mathbb{M}\times\mathbb{M}\times\mathbb{M}\times\mathbb{M}$)
   element in $\mbox{supp}(v)$, called the {\em degree} of $v$.

\begin{lem}\label{lem1}
Let $d_1,d_2,d_3\in \mathbb{Z}$ with $d_1+d_3\geq d_2$ and $V$ be a simple $\widehat{\mathfrak{age}}(1)_{(d_1,d_2,d_3)}$-module
satisfying the conditions in Theorem \ref{thm1}.
For any $v\in \mbox{Ind}_{\widehat{\mathfrak{age}}(1)_{(d_1,d_2,d_3)}}^{\widehat{\mathfrak{age}}(1)}(V)\backslash V$
  written in the form of (\ref{eq:idelt}),
  denote ${\rm deg}(v)= ({\bf i,j,m,n})$.
  \vspace{3mm}\\
 {\rm (1)} If ${\bf n\ne 0}$, let $r={\rm min}\{s:n_s \ne 0\}>0$,
 then ${\rm deg}(p_{l+r}v) =({\bf i,j,m},{\bf n}-\epsilon_{r})$.\\
 {\rm (2)} If ${\bf n=0, m\ne0}$, let $a={\rm min}\{s:m_s \ne 0\}>0$,
 then ${\rm deg}(e_{a+l-d_3}v) =({\bf i,j},{\bf m}-\epsilon_{a},{\bf 0})$.\\
 {\rm (3)} If ${\bf n=m=0, j\ne 0}$, let $b={\rm min}\{j:j_s \ne 0\}>0$,
 then ${\rm deg}(q_{b-d_1+l}v) =({\bf i},{\bf j}-\epsilon_{b},{\bf 0,0})$.\\
 {\rm (4)} If ${\bf n=m=j=0, i\ne 0}$, let $c={\rm min}\{s:i_s \ne 0\}>0$,
 then ${\rm deg}(h_{c-d_2+l}v) =({\bf i}-\epsilon_{c},{\bf 0,0,0})$.\\
\end{lem}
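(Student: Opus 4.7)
The plan is to prove all four parts by the same template: apply a prescribed operator $X$ to the PBW expansion of $v$ and verify that exactly one commutator produces $p_l$ acting on $V$ (hence nonzero by the injectivity in (a)), while every other contribution lies strictly below the asserted degree in the principal total order. The choice of $X$ in the four cases is dictated by the brackets $[p_{l+r},h_{-r}]=-p_l$, $[e_{a+l-d_3},q_{-a+d_3}]=p_l$, $[q_{b-d_1+l},e_{-b+d_1}]=-p_l$, $[h_{c-d_2+l},p_{-c+d_2}]=p_l$, so that $X$ is shifted by exactly $l$ relative to the innermost nonzero basis factor. A first observation common to all four cases is that $X$ is indexed by an integer that exceeds the corresponding threshold in (b) by $r,a,b,c\geq 1$ respectively, whence $X$ annihilates $V$ outright; thus the action of $X$ on any summand of $v$ comes entirely from commutators with the PBW factors.

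For case (1), $X=p_{l+r}$ commutes with the $p$- and $e$-factors, so only $q^{{\bf m}^\prime}$ and $h^{{\bf n}^\prime}$ contribute. A commutator with $q_{-s+d_3}$ produces the central element $z_{l+r-s+d_3}$; using $d_1+d_3\geq d_2$ and the hypothesis $z_jV=0$ for $j\geq l+d_2-d_1$, one checks that this $z$-factor kills $V$ for every $s\leq r$, while for $s>r$ the surviving term has ${\bf w(m^\prime+n^\prime)}$ decreased by $s$, hence strictly smaller principal degree. A commutator with $h_{-s}$ (where $n^\prime_s\neq 0$) produces $-p_{l+r-s}$: for $s<r$ the result is $p_j$ with $j\geq l+1$, annihilating $V$ by (b); for $s=r$ applied to the leading summand one obtains $-n_r p_l v_{\bf i,j,m,n}$, nonzero by (a), of degree $({\bf i},{\bf j},{\bf m},{\bf n}-\epsilon_r)$; for $s>r$ the principal degree strictly drops. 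Nested commutators, where the produced $p_{l+r-s}$ is moved past surviving factors, can only further lower the degree, and summands of $v$ of lower initial degree contribute strictly below $({\bf i},{\bf j},{\bf m},{\bf n}-\epsilon_r)$.

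Cases (2)--(4) follow the identical template. In (2), with ${\bf n}={\bf 0}$, $X=e_{a+l-d_3}$ interacts only with $q^{{\bf m}^\prime}$ via $[e_{a+l-d_3},q_{-s+d_3}]=p_{a+l-s}$: this annihilates $V$ for $s<a$ by (b), gives $p_l$ at $s=a$, and yields strictly smaller degree for $s>a$. In (3), with ${\bf n}={\bf m}={\bf 0}$, $X=q_{b-d_1+l}$ produces central $z_{l+b-s+d_2-d_1}$ when commuted with $p_{-s+d_2}$ (vanishing on $V$ for $s\leq b$) and $-p_{l+b-s}$ when commuted with $e_{-s+d_1}$ (vanishing on $V$ for $s<b$, giving $-p_l$ at $s=b$, smaller degree for $s>b$). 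In (4), with only $p$-factors remaining, $X=h_{c-d_2+l}$ produces $p_{c-s+l}$ from $[h_{c-d_2+l},p_{-s+d_2}]$, with analogous behavior.

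The main obstacle is the careful bookkeeping of the non-leading commutators, specifically those in which the newly produced $p$ has intermediate index $d_2\leq l+r-s<l$ (so that it lands in the subalgebra and acts on $V$ without vanishing), or index $<d_2$ (so that it becomes a new basis factor, shifting the ${\bf i}$-component), together with nested commutators between the new operator and the remaining $h$- or $q$-factors. The six-step principal total order is calibrated precisely so that each such correction strictly decreases the relevant quantity at an earlier step (first ${\bf w(m+n)}$, then ${\bf n}$, then ${\bf m}$, then ${\bf w(i+j)}$, and so on), which is exactly what lets the distinguished term dominate.
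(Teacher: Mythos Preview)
Your proposal is correct and follows essentially the same route as the paper: apply the indicated operator $X$ to each PBW summand, observe that $X$ kills $V$ by condition (b), split into commutators with the relevant PBW blocks, and check that the unique commutator producing $p_l$ on the leading summand dominates in the principal total order while all other contributions are strictly smaller. One small imprecision worth tightening: in cases (2)--(4) you describe only the commutators relevant to summands sharing the leading zero pattern (e.g.\ in (2) you say $e_{a+l-d_3}$ ``interacts only with $q^{\mathbf{m}'}$''), but non-leading summands may have $\mathbf{n}'\neq\mathbf{0}$ (respectively $\mathbf{j}'\neq\mathbf{0}$ in (4)), giving extra commutators $[e_{a+l-d_3},h_{-x}]=-2e_{a-x+l-d_3}$ (respectively $[h_{c-d_2+l},e_{-y+d_1}]=2e_{c-y+l+d_1-d_2}$) that the paper treats explicitly; since such summands already satisfy $\mathbf{w}(\mathbf{m}'+\mathbf{n}')<\mathbf{w}(\mathbf{m})$ (respectively $\mathbf{w}(\mathbf{i}'+\mathbf{j}')<\mathbf{w}(\mathbf{i})$), these terms are harmless, and your closing paragraph correctly anticipates this bookkeeping.
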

\begin{proof}
(1) It suffices to consider those $v_{\bf i^\prime,j^\prime,m^\prime,n^\prime}$ with $p_{l+r}p^{\bf i^\prime}e^{\bf j^\prime}q^{\bf m^\prime}h^{\bf n^\prime}v_{\bf i^\prime,j^\prime,m^\prime,n^\prime}\ne0$.
Noticing that $p_{l+r}v_{\bf i^\prime,j^\prime,m^\prime,n^\prime}=0$ for any $({\bf i^\prime,j^\prime,m^\prime,n^\prime})\in \mbox{supp}(v)$.
We have
$$p_{l+r}p^{\bf i^\prime}e^{\bf j^\prime}q^{\bf m^\prime}h^{\bf n^\prime}v_{\bf i^\prime,j^\prime,m^\prime,n^\prime}=
p^{\bf i^\prime}e^{\bf j^\prime}[p_{l+r},q^{\bf m^\prime}]h^{\bf n^\prime}v_{\bf i^\prime,j^\prime,m^\prime,n^\prime}+
p^{\bf i^\prime}e^{\bf j^\prime}q^{\bf m^\prime}[p_{l+r},h^{\bf n^\prime}]v_{\bf i^\prime,j^\prime,m^\prime,n^\prime}.$$

For the first term $p^{\bf i^\prime}e^{\bf j^\prime}[p_{l+r},q^{\bf m^\prime}]h^{\bf n^\prime}v_{\bf i^\prime,j^\prime,m^\prime,n^\prime}$.
Consider
$$p^{\bf i^\prime}e^{\bf j^\prime}q^{\bf m^*}[p_{l+r},q_{-y+d_3}]q^{{\bf m^\prime-m^*}-\epsilon_y}h^{\bf n^\prime}v_{\bf i^\prime,j^\prime,m^\prime,n^\prime}=
p^{\bf i^\prime}e^{\bf j^\prime}q^{{\bf m^\prime}-\epsilon_y}h^{\bf n^\prime}z_{l+d_3+r-y}v_{\bf i^\prime,j^\prime,m^\prime,n^\prime},$$
where ${\bf m^*}\in\mathbb{M}$ such that $m_s^*=m^\prime_s$ for $s>y$ and $m_s^*=0$ for $s\leq y$,
since $d_3\geq d_2-d_1$, then this is zero for $y\leq r$.
If it is nonzero, denote
$$\mbox{deg}(p^{\bf i^\prime}e^{\bf j^\prime}q^{{\bf m^\prime}-\epsilon_y}h^{\bf n^\prime}z_{l+d_3+r-y}v_{\bf i^\prime,j^\prime,m^\prime,n^\prime})
=({\bf i_y,j_y,m_y,n_y}).$$
Then $({\bf i_y,j_y,m_y,n_y})=({\bf i^\prime,j^\prime,m^\prime}-\epsilon_y,{\bf n^\prime})$,
$${\bf w(m_y+n_y)=w(m^\prime+n^\prime)}-y<{\bf w(m^\prime+n^\prime)}-r\leq {\bf w(m+n)}-r={\bf w(m+n}-\epsilon_r),$$
hence $({\bf i_y,j_y,m_y,n_y})\prec ({\bf i,j,m},{\bf n}-\epsilon_{r})$.

For the second term $p^{\bf i^\prime}e^{\bf j^\prime}q^{\bf m^\prime}[p_{l+r},h^{\bf n^\prime}]v_{\bf i^\prime,j^\prime,m^\prime,n^\prime}$.
Consider
$$p^{\bf i^\prime}e^{\bf j^\prime}q^{\bf m^\prime}h^{\bf n^*}[p_{l+r},h_{-x}]h^{{\bf n^\prime-n^*}-\epsilon_x}v_{\bf i^\prime,j^\prime,m^\prime,n^\prime}=
-p^{\bf i^\prime}e^{\bf j^\prime}q^{\bf m^\prime}h^{\bf n^*}p_{l+r-x}h^{{\bf n^\prime-n^*}-\epsilon_x}v_{\bf i^\prime,j^\prime,m^\prime,n^\prime},$$
where ${\bf n^*}\in\mathbb{M}$ such that $n_s^*=n^\prime_s$ for $s>x$ and $n_s^*=0$ for $s\leq x$.
Denote
$$\mbox{deg}(p^{\bf i^\prime}e^{\bf j^\prime}q^{\bf m^\prime}h^{\bf n^*}p_{l+r-x}h^{{\bf n^\prime-n^*}-\epsilon_x}v_{\bf i^\prime,j^\prime,m^\prime,n^\prime})
=({\bf i_x,j_x,m_x,n_x}).$$
If $x>l+r-d_2$, we have $({\bf i_x,j_x,m_x,n_x})=({\bf i^\prime}+\epsilon_{x-l-r+d_2},{\bf j^\prime,m^\prime,n^\prime}-\epsilon_{x})$;
if $r<x\leq l+r-d_2$, we have $({\bf i_x,j_x,m_x,n_x})\preceq({\bf i^\prime},{\bf j^\prime,m^\prime,n^\prime}-\epsilon_{x})$,
then $${\bf w(m_x+n_x)}\leq{\bf w(m^\prime+n^\prime)}-x<{\bf w(m+n)}-r,$$
hence $({\bf i_x,j_x,m_x,n_x})\prec ({\bf i,j,m},{\bf n}-\epsilon_{r})$.
If $x<r$, we have ${\bf w(m^\prime+n^\prime)}<{\bf w(m+n)}$, otherwise it is a contradiction to
${\bf (i^\prime,j^\prime,m^\prime,n^\prime)}\preceq{\bf (i,j,m,n)}$, then
$${\bf w(m_x+n_x)}\leq{\bf w(m^\prime+n^\prime)}-r<{\bf w(m+n)}-r,$$
hence $({\bf i_x,j_x,m_x,n_x})\prec ({\bf i,j,m},{\bf n}-\epsilon_{r})$.
If $x=r$, we have $({\bf i_x,j_x,m_x,n_x})=({\bf i^\prime},{\bf j^\prime,m^\prime,n^\prime}-\epsilon_{r})\preceq ({\bf i,j,m},{\bf n}-\epsilon_{r})$,
the equality holds if and only if ${\bf i^\prime=i, j^\prime=j}$, ${\bf m^\prime=m, n^\prime=n}$.

Combining all the arguments, we see that ${\rm deg}(p_{l+r}v) =({\bf i,j,m},{\bf n}-\epsilon_{r})$.

\vspace{3mm}
(2) It suffices to consider those $v_{\bf i^\prime,j^\prime,m^\prime,n^\prime}$ with $e_{a+l-d_3}p^{\bf i^\prime}e^{\bf j^\prime}q^{\bf m^\prime}h^{\bf n^\prime}v_{\bf i^\prime,j^\prime,m^\prime,n^\prime}\ne0$.
Noticing that $e_{a+l-d_3}v_{\bf i^\prime,j^\prime,m^\prime,n^\prime}=0$ for any $({\bf i^\prime,j^\prime,m^\prime,n^\prime})\in \mbox{supp}(v)$.
We have
$$e_{a+l-d_3}p^{\bf i^\prime}e^{\bf j^\prime}q^{\bf m^\prime}h^{\bf n^\prime}v_{\bf i^\prime,j^\prime,m^\prime,n^\prime}=
p^{\bf i^\prime}e^{\bf j^\prime}[e_{a+l-d_3},q^{\bf m^\prime}]h^{\bf n^\prime}v_{\bf i^\prime,j^\prime,m^\prime,n^\prime}+
p^{\bf i^\prime}e^{\bf j^\prime}q^{\bf m^\prime}[e_{a+l-d_3},h^{\bf n^\prime}]v_{\bf i^\prime,j^\prime,m^\prime,n^\prime}.$$

For the second term $p^{\bf i^\prime}e^{\bf j^\prime}q^{\bf m^\prime}[e_{a+l-d_3},h^{\bf n^\prime}]v_{\bf i^\prime,j^\prime,m^\prime,n^\prime}$.
We have ${\bf w(m^\prime+n^\prime)}<{\bf w(m)}$, otherwise it is a contradiction to
${\bf (i^\prime,j^\prime,m^\prime,n^\prime)}\preceq{\bf (i,j,m,0)}$.
Consider
$$p^{\bf i^\prime}e^{\bf j^\prime}q^{\bf m^\prime}h^{\bf n^*}[e_{a+l-d_3},h_{-x}]h^{{\bf n^\prime-n^*}-\epsilon_x}v_{\bf i^\prime,j^\prime,m^\prime,n^\prime}
=-2p^{\bf i^\prime}e^{\bf j^\prime}q^{\bf m^\prime}h^{\bf n^*}e_{a-x+l-d_3}h^{{\bf n^\prime-n^*}-\epsilon_x}v_{\bf i^\prime,j^\prime,m^\prime,n^\prime},$$
where ${\bf n^*}\in\mathbb{M}$ such that $n_s^*=n^\prime_s$ for $s>x$ and $n_s^*=0$ for $s\leq x$.
Denote
$$\mbox{deg}(p^{\bf i^\prime}e^{\bf j^\prime}q^{\bf m^\prime}h^{\bf n^*}e_{a-x+l-d_3}h^{{\bf n^\prime-n^*}-\epsilon_x}v_{\bf i^\prime,j^\prime,m^\prime,n^\prime})
=({\bf i_x,j_x,m_x,n_x}),$$
then we have $${\bf w(m_x+n_x)}\leq{\bf w(m^\prime+n^\prime)}-a<{\bf w(m)}-a={\bf w(m}-\epsilon_a),$$
hence $({\bf i_x,j_x,m_x,n_x})\prec({\bf i,j},{\bf m}-\epsilon_{a},{\bf 0})$.

For the first term $p^{\bf i^\prime}e^{\bf j^\prime}[e_{a+l-d_3},q^{\bf m^\prime}]h^{\bf n^\prime}v_{\bf i^\prime,j^\prime,m^\prime,n^\prime}$.
Consider
$$p^{\bf i^\prime}e^{\bf j^\prime}q^{\bf m^*}[e_{a+l-d_3},q_{-y+d_3}]q^{{\bf m^\prime-m^*}-\epsilon_y}h^{\bf n^\prime}v_{\bf i^\prime,j^\prime,m^\prime,n^\prime}=
p^{\bf i^\prime}e^{\bf j^\prime}q^{\bf m^*}p_{a-y+l}q^{{\bf m^\prime-m^*}-\epsilon_y}h^{\bf n^\prime}v_{\bf i^\prime,j^\prime,m^\prime,n^\prime},$$
where ${\bf m^*}\in\mathbb{M}$ such that $m_s^*=m^\prime_s$ for $s>y$ and $m_s^*=0$ for $s\leq y$.
Denote
$$\mbox{deg}(p^{\bf i^\prime}e^{\bf j^\prime}q^{\bf m^*}p_{a-y+l}q^{{\bf m^\prime-m^*}-\epsilon_y}h^{\bf n^\prime}v_{\bf i^\prime,j^\prime,m^\prime,n^\prime})
=({\bf i_y,j_y,m_y,n_y}).$$
If $y>l+a-d_2$, we have $({\bf i_y,j_y,m_y,n_y})=({\bf i^\prime}+\epsilon_{y-l-a+d_2},{\bf j^\prime,m^\prime}-\epsilon_{y},{\bf n^\prime})$;
if $a<y\leq l+a-d_2$, we have $({\bf i_y,j_y,m_y,n_y})\preceq({\bf i^\prime},{\bf j^\prime,m^\prime}-\epsilon_{y},{\bf n^\prime})$,
then $${\bf w(m_y+n_y)}\leq{\bf w(m^\prime+n^\prime)}-y<{\bf w(m)}-a,$$
hence $({\bf i_y,j_y,m_y,n_y})\prec ({\bf i,j},{\bf m}-\epsilon_{a},{\bf 0})$.
If $y<a$, ${\bf w(m^\prime+n^\prime)}<{\bf w(m)}$, otherwise it is a contradiction to
${\bf (i^\prime,j^\prime,m^\prime,n^\prime)}\preceq{\bf (i,j,m,0)}$, we have
$${\bf w(m_y+n_y)}\leq{\bf w(m^\prime+n^\prime)}-a<{\bf w(m)}-a,$$
hence $({\bf i_y,j_y,m_y,n_y})\prec ({\bf i,j},{\bf m}-\epsilon_{a},{\bf 0})$.
If $y=a$, we have $({\bf i_y,j_y,m_y,n_y})=({\bf i^\prime},{\bf j^\prime,m^\prime}-\epsilon_{a},{\bf n^\prime})\preceq ({\bf i,j,m}-\epsilon_{a},{\bf 0})$,
the equality holds if and only if ${\bf i^\prime=i, j^\prime=j}$, ${\bf m^\prime=m, n^\prime=0}$.

Combining all the arguments, we see that ${\rm deg}(e_{a+l-d_3}v) =({\bf i,j},{\bf m}-\epsilon_{a},{\bf 0})$.

\vspace{3mm}
(3) It suffices to consider those $v_{\bf i^\prime,j^\prime,0,0}$ with $q_{b-d_1+l}p^{\bf i^\prime}e^{\bf j^\prime}v_{\bf i^\prime,j^\prime,0,0}\ne0$.

Noticing that $q_{b-d_1+l}v_{\bf i^\prime,j^\prime,0,0}=0$ for any $({\bf i^\prime,j^\prime,0,0})\in \mbox{supp}(v)$.
We have
$$q_{b-d_1+l}p^{\bf i^\prime}e^{\bf j^\prime}v_{\bf i^\prime,j^\prime,0,0}=
[q_{b-d_1+l},p^{\bf i^\prime}]e^{\bf j^\prime}v_{\bf i^\prime,j^\prime,0,0}+
p^{\bf i^\prime}[q_{b-d_1+l},e^{\bf j^\prime}]v_{\bf i^\prime,j^\prime,0,0}.$$

For the first term $[q_{b-d_1+l},p^{\bf i^\prime}]e^{\bf j^\prime}v_{\bf i^\prime,j^\prime,0,0}$.
Consider
$$p^{\bf i^*}[q_{b-d_1+l},p_{-y+d_2}]p^{{\bf i^\prime-i^*}-\epsilon_y}e^{\bf j^\prime}v_{\bf i^\prime,j^\prime,0,0}=
-p^{{\bf i^\prime}-\epsilon_y}e^{\bf j^\prime}z_{b-y+d_2-d_1+l}v_{\bf i^\prime,j^\prime,0,0},$$
where ${\bf i^*}\in\mathbb{M}$ such that $i_s^*=i^\prime_s$ for $s>y$ and $i_s^*=0$ for $s\leq y$,
then this is zero for $y\leq b$. If it is nonzero, denote
$$\mbox{deg}(p^{{\bf i^\prime}-\epsilon_y}e^{\bf j^\prime}z_{b-y+d_2-d_1+l}v_{\bf i^\prime,j^\prime,0,0})
=({\bf i_y,j_y,0,0}).$$
Then $({\bf i_y,j_y,0,0})=({\bf i^\prime}-\epsilon_y,{\bf j^\prime},{\bf 0,0})$,
$${\bf w(i_y+j_y)=w(i^\prime+j^\prime)}-y<{\bf w(i^\prime+j^\prime)}-b\leq {\bf w(i+j)}-b={\bf w(i+j}-\epsilon_b),$$
hence $({\bf i_y,j_y,0,0})\prec ({\bf i,j}-\epsilon_{b},{\bf 0,0})$.

For the second term $p^{\bf i^\prime}[q_{b-d_1+l},e^{\bf j^\prime}]v_{\bf i^\prime,j^\prime,0,0}$.
Consider
$$p^{\bf i^\prime}e^{\bf j^*}[q_{b-d_1+l},e_{-x+d_1}]e^{{\bf j^\prime-j^*}-\epsilon_x}v_{\bf i^\prime,j^\prime,0,0}=
-p^{\bf i^\prime}e^{{\bf j^\prime}-\epsilon_x}p_{b-x+l}v_{\bf i^\prime,j^\prime,0,0},$$
where ${\bf j^*}\in\mathbb{M}$ such that $j_s^*=j^\prime_s$ for $s>x$ and $j_s^*=0$ for $s\leq x$,
then this is zero for $x<b$. If it is nonzero, denote
$$\mbox{deg}(p^{\bf i^\prime}e^{{\bf j^\prime}-\epsilon_x}p_{b-x+l}v_{\bf i^\prime,j^\prime,0,0})
=({\bf i_x,j_x,0,0}).$$
If $x>l+b-d_2$, we have $({\bf i_x,j_x,0,0})=({\bf i^\prime}+\epsilon_{x-l-b+d_2},{\bf j^\prime}-\epsilon_{x},{\bf 0,0})$,
since $l>d_2$, $${\bf w(i_x+j_x)}={\bf w(i^\prime+j^\prime)}-x+x-l-b+d_2<{\bf w(i+j)}-b,$$
hence $({\bf i_x,j_x,0,0})\prec ({\bf i,j}-\epsilon_{b},{\bf 0,0})$.
If $b<x\leq l+b-d_2$, we have $({\bf i_x,j_x,0,0})\preceq({\bf i^\prime},{\bf j^\prime}-\epsilon_{x},{\bf 0,0})$,
then ${\bf w(i_x+j_x)}\leq{\bf w(i^\prime+j^\prime)}-x<{\bf w(i+j)}-b,$
hence $({\bf i_x,j_x,0,0})\prec ({\bf i,j}-\epsilon_{b},{\bf 0,0})$.
If $x=b$, we have $({\bf i_x,j_x,0,0})=({\bf i^\prime},{\bf j^\prime}-\epsilon_{b},{\bf 0,0})\preceq({\bf i,j}-\epsilon_{b},{\bf 0,0})$,
the equality holds if and only if ${\bf i^\prime=i, j^\prime=j}$.

Combining all the arguments, we see that ${\rm deg}(q_{b-d_1+l}v) =({\bf i},{\bf j}-\epsilon_{b},{\bf 0,0})$.

\vspace{3mm}
(4) It suffices to consider those $v_{\bf i^\prime,j^\prime,0,0}$ with $h_{c-d_2+l}p^{\bf i^\prime}e^{\bf j^\prime}v_{\bf i^\prime,j^\prime,0,0}\ne0$.

Noticing that $h_{c-d_2+l}v_{\bf i^\prime,j^\prime,0,0}=0$ for any $({\bf i^\prime,j^\prime,0,0})\in \mbox{supp}(v)$.
We have
$$h_{c-d_2+l}p^{\bf i^\prime}e^{\bf j^\prime}v_{\bf i^\prime,j^\prime,0,0}=
[h_{c-d_2+l},p^{\bf i^\prime}]e^{\bf j^\prime}v_{\bf i^\prime,j^\prime,0,0}+
p^{\bf i^\prime}[h_{c-d_2+l},e^{\bf j^\prime}]v_{\bf i^\prime,j^\prime,0,0}.$$

For the second term $p^{\bf i^\prime}[h_{c-d_2+l},e^{\bf j^\prime}]v_{\bf i^\prime,j^\prime,0,0}$.
We have ${\bf w(i^\prime+j^\prime)}<{\bf w(i)}$, otherwise it is a contradiction to
${\bf (i^\prime,j^\prime,0,0)}\preceq{\bf (i,0,0,0)}$.
Consider
$$p^{\bf i^\prime}e^{\bf j^*}[h_{c-d_2+l},e_{-y+d_1}]e^{{\bf j^\prime-j^*}-\epsilon_y}v_{\bf i^\prime,j^\prime,0,0}
=2p^{\bf i^\prime}e^{{\bf j^\prime}-\epsilon_y}e_{c-y+l+d_1-d_2}v_{\bf i^\prime,j^\prime,0,0},$$
where ${\bf j^*}\in\mathbb{M}$ such that $j_s^*=j^\prime_s$ for $s>y$ and $j_s^*=0$ for $s\leq y$.
Since $d_1-d_2\geq-d_3$, then this is zero for $y<c$, if it is nonzero,
denote
$$\mbox{deg}(p^{\bf i^\prime}e^{{\bf j^\prime}-\epsilon_y}e_{c-y+l+d_1-d_2}v_{\bf i^\prime,j^\prime,0,0})
=({\bf i_y,j_y,0,0}),$$
then we have $${\bf w(i_y+j_y)}\leq{\bf w(i^\prime+j^\prime)}-c<{\bf w(i)}-c={\bf w(i}-\epsilon_c),$$
hence $({\bf i_y,j_y,0,0})\prec({\bf i}-\epsilon_{c},{\bf 0,0,0})$.

For the first term $[h_{c-d_2+l},p^{\bf i^\prime}]e^{\bf j^\prime}v_{\bf i^\prime,j^\prime,0,0}$.
Consider
$$p^{\bf i^*}[h_{c-d_2+l},p_{-x+d_2}]p^{{\bf i^\prime-i^*}-\epsilon_x}e^{\bf j^\prime}v_{\bf i^\prime,j^\prime,0,0}=
p^{{\bf i^\prime}-\epsilon_x}e^{\bf j^\prime}p_{c-x+l}v_{\bf i^\prime,j^\prime,0,0},$$
where ${\bf i^*}\in\mathbb{M}$ such that $i_s^*=i^\prime_s$ for $s>x$ and $i_s^*=0$ for $s\leq x$.
Then this is zero for $x<c$, if it is nonzero,
denote
$\mbox{deg}(p^{{\bf i^\prime}-\epsilon_x}e^{\bf j^\prime}p_{c-x+l}v_{\bf i^\prime,j^\prime,0,0})
=({\bf i_x,j_x,0,0}).$
If $x>l+c-d_2$, we have $({\bf i_x,j_x,0,0})=({\bf i^\prime}+\epsilon_{x-l-c+d_2}-\epsilon_{x},{\bf j^\prime,0},{\bf 0})$,
then $${\bf w(i_x+j_x)}={\bf w(i^\prime+j^\prime)}-x+x-l-c+d_2<{\bf w(i)}-c,$$
hence $({\bf i_x,j_x,0,0})\prec ({\bf i}-\epsilon_{c},{\bf 0,0,0})$.
If $c<x\leq l+c-d_2$, we have $({\bf i_x,j_x,0,0})\preceq({\bf i^\prime}-\epsilon_{x},{\bf j^\prime},{\bf 0,0})$,
then ${\bf w(i_x+j_x)}\leq{\bf w(i^\prime+j^\prime)}-x<{\bf w(i)}-c,$
hence $({\bf i_x,j_x,0,0})\prec ({\bf i}-\epsilon_{c},{\bf 0,0,0})$.
If $x=c$, we have $({\bf i_x,j_x,0,0})=({\bf i^\prime}-\epsilon_{c},{\bf j^\prime},{\bf 0,0})\preceq({\bf i}-\epsilon_{c},{\bf 0,0,0})$,
the equality holds if and only if ${\bf i^\prime=i, j^\prime=0}$.

Combining all the arguments, we see that ${\rm deg}(h_{c-d_2+l}v) =({\bf i}-\epsilon_{c},{\bf 0,0,0})$.
\end{proof}

\vspace{3mm}
\noindent{\em Proof of Theorem \ref{thm1}.}
For any nonzero element $v\in \mbox{Ind}_{\widehat{\mathfrak{age}}(1)_{(d_1,d_2,d_3)}}^{\widehat{\mathfrak{age}}(1)}(V)$,
by using Lemma \ref{lem1} repeatedly,
we can obtain a nonzero element in
$U(\widehat{\mathfrak{age}}(1))v\cap V$,
which gives the simplicity of $\mbox{Ind}_{\widehat{\mathfrak{age}}(1)_{(d_1,d_2,d_3)}}^{\widehat{\mathfrak{age}}(1)}(V)$.
 For any $v\in \mbox{Ind}_{\widehat{\mathfrak{age}}(1)_{(d_1,d_2,d_3)}}^{\widehat{\mathfrak{age}}(1)}(V)$,
  we can take
  $$m>{\bf w(i^\prime+j^\prime+m^\prime+n^\prime)}+l+|d_1|{\bf d(j^\prime)}+|d_2|{\bf d(i^\prime)}+|d_3|{\bf d(m^\prime)}$$
  for all ${\bf (i^\prime,j^\prime,m^\prime,n^\prime)}\in\mbox{supp}(v)$,
  then by direct computations we get $$e_{m}v=p_{m}v=q_{m}v=h_{m}v=z_{m}v=0.$$
  Hence $\mbox{Ind}_{\widehat{\mathfrak{age}}(1)_{(d_1,d_2,d_3)}}^{\widehat{\mathfrak{age}}(1)}(V)$ is simple restricted $\widehat{\mathfrak{age}}(1)$-module.
\,\,\,\,\,\,\,\,\,\,\,\,\,\,\,\,\,\,\,\,\,\,\,\,\,\,\,\,\,\,\,\,\,\,\,
\,\,\,\,\,\,\,\,\,\,\,\,\,\,\,\,\,\,\,\,\,\,\,\,\,\,\,\,\,\,\,\,\,\,\,\,\,\,\,
$\Box$

\vspace{3mm}
Let $S$ be a simple restricted $\widehat{\mathfrak{age}}(1)$-module such that there exist $a\in \mathbb{Z}$ such that the action of $p_a$ on $S$ are injective.
Since $S$ is simple, then $z_m$ acts on $S$ as scalar $\mathcal{z}_m$ for all $m\in\mathbb{Z}$.
For any $x_1,x_2,x_3,x_4\in\mathbb{Z}$,
 consider the vector space
\begin{equation}\nonumber
N_{x_1,x_2,x_3,x_4}=\{ v\in S\ | \ p_{x_1+i}v=e_{x_2+i}v=h_{x_3+i}v=q_{x_4+i}v=0 \,\,\mbox{for all}\,\,i\in\mathbb{Z}_+\}.
\end{equation}
Since $S$ is a restricted $\widehat{\mathfrak{age}}(1)$-module,
we know that $N_{x_1,x_2,x_3,x_4}\ne 0$ for sufficiently large integers $x_1,x_2,x_3,x_4$
and there exist $r\in \mathbb{Z}$ such that $\mathcal{z}_{r-1}\ne0, \mathcal{z}_{r+i}=0$ for all $i\in \mathbb{N}$.
Note that if $N_{x_1,x_2,x_3,x_4}\ne 0$, then for any $i_{1},i_{2},i_{3},i_4\in\mathbb{N}$,
$N_{x_1+i_{1},x_2+i_{2},x_3+i_{3},x_4+i_{4}}\neq 0$.
Since $p_{a}$ acts injectively on $S$, we can find
smallest integer $l\in\mathbb{Z}_{\geqslant a}$ such
that $N_{l,x_2,x_3,x_4}\neq0$.
Let $l^\prime\in\mathbb{Z}$ be the smallest integer such
that $N_{l,l^\prime,x_3,x_4}\neq0$.

\begin{thm}
If $r\leq 2l-l^\prime$,
then there exists $d_1,d_2,d_3\in\mathbb{Z}$ and a simple $\widehat{\mathfrak{age}}(1)_{(d_1,d_2,d_3)}$-module $V$
satisfying the conditions in Theorem \ref{thm1} such that $S\cong {\rm Ind}_{\widehat{\mathfrak{age}}(1)_{(d_1,d_2,d_3)}}^{\widehat{\mathfrak{age}}(1)}(V)$.
\end{thm}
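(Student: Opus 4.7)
The plan is to extract from $S$ a suitable subspace $V$ defined by annihilation data, choose $d_1, d_2, d_3\in\mathbb{Z}$ so that $\widehat{\mathfrak{age}}(1)_{(d_1,d_2,d_3)}$ preserves $V$ and the hypotheses of Theorem \ref{thm1} are satisfied, and then identify $S$ with $\mathrm{Ind}_{\widehat{\mathfrak{age}}(1)_{(d_1,d_2,d_3)}}^{\widehat{\mathfrak{age}}(1)}(V)$ via Frobenius reciprocity.

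First I would extend the procedure yielding $l$ and $l'$ by introducing further minima $l''$ and $l'''$ so that $V := N_{l,l',l'',l'''}\ne 0$, and then choose $d_1, d_2, d_3$ as the integers for which the bracket computations make $\widehat{\mathfrak{age}}(1)_{(d_1,d_2,d_3)}\cdot V\subset V$ while making the annihilation conditions in Theorem \ref{thm1}(b) sharp. Invariance is checked against the identities $[h_{i_3+j},e_{d_1+i}] = 2e_{i_3+j+d_1+i}$, $[q_{i_4+j},e_{d_1+i}] = -p_{i_4+j+d_1+i}$, $[h_{i_3+j},p_{d_2+i}] = p_{i_3+j+d_2+i}$, $[q_{i_4+j},p_{d_2+i}] = -z_{i_4+j+d_2+i}$, and the analogues for $q_{d_3+i}$; the middle one in particular forces a lower bound on $d_2$ in terms of $r$, while the Theorem \ref{thm1} condition $z_{l+d_2-d_1+i-1}V=0$ ($i\in\mathbb{Z}_+$) demands $l+d_2-d_1\ge r$. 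With the natural optimal assignment (essentially $d_2 = l-1$, $d_1 = l'-1$, $d_3 = l-l'$, together with $l'', l'''$ adjusted for compatibility), this last inequality translates to $2l-l'\ge r$, exactly the given hypothesis; the subalgebra relation $d_1+d_3\ge d_2$ then reduces to equality.

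Injectivity of $p_l$ on $V$ (condition (a) of Theorem \ref{thm1}) is immediate from the minimality of $l$: any $v\in V$ with $p_l v = 0$ would lie in $N_{l-1,l',l'',l'''} = 0$. The remaining vanishings $p_{l+i}V = e_{l-d_3+i}V = q_{l-d_1+i}V = h_{l-d_2+i}V = 0$ become routine index comparisons with the defining annihilators of $V$. The crux is showing that $V$ is simple as a $\widehat{\mathfrak{age}}(1)_{(d_1,d_2,d_3)}$-module: for a nonzero $v\in V$, simplicity of $S$ yields $U(\widehat{\mathfrak{age}}(1))v = S$, and then a PBW argument combined with the extremality of $l, l', l'', l'''$ must be invoked to show that any prescribed element of $V$ already lies in $U(\widehat{\mathfrak{age}}(1)_{(d_1,d_2,d_3)})v$ --- the generators outside the subalgebra can be removed because they would take the vector outside the annihilator-defined cone that cuts out $V$.

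Finally, the inclusion $V\hookrightarrow S$ induces by Frobenius reciprocity an $\widehat{\mathfrak{age}}(1)$-module homomorphism $\Phi:\mathrm{Ind}_{\widehat{\mathfrak{age}}(1)_{(d_1,d_2,d_3)}}^{\widehat{\mathfrak{age}}(1)}(V)\to S$. The map $\Phi$ is surjective since its image contains the nonzero $V$ and $S$ is simple, and injective since the domain is simple by Theorem \ref{thm1}, so $\Phi$ is the desired isomorphism. The hard part will be establishing the simplicity of $V$: one must carefully leverage the minimalities of $l, l', l'', l'''$ together with the simplicity and restrictedness of $S$ to exclude every proper nontrivial $\widehat{\mathfrak{age}}(1)_{(d_1,d_2,d_3)}$-invariant subspace of $V$, essentially converting a global $U(\widehat{\mathfrak{age}}(1))$-action into a local $U(\widehat{\mathfrak{age}}(1)_{(d_1,d_2,d_3)})$-action inside $V$.
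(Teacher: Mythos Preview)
Your overall architecture---choose $V$ as an annihilation subspace of $S$, verify $\widehat{\mathfrak{age}}(1)_{(d_1,d_2,d_3)}$-invariance and conditions (a),(b), then pass through the induced module---matches the paper. Your specific parameters $d_2=l-1$, $d_1=l'-1$, $d_3=l-l'$ are a special case of what the paper does: there only $d_3=l-l'$ is fixed, while $x_3,x_4$ are chosen freely subject to $l-r\ge x_3-x_4\ge l'-l$ and $x_3\ge 1$, setting $d_1=l-x_4$, $d_2=l-x_3$ and $V=N_{l,l',x_3,x_4}$. Your choice is the extremal case $x_3=1$, $x_4=l-l'+1$. (Your description of $l'',l'''$ as ``further minima'' that then must be ``adjusted for compatibility'' is internally inconsistent; the paper never minimizes these indices, it simply picks values satisfying the inequalities.)

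The substantive gap is the order of logic around simplicity of $V$. You propose to prove $V$ simple \emph{first}, so that Theorem~\ref{thm1} makes $\mathrm{Ind}(V)$ simple and hence $\Phi$ injective. But your sketch---that generators outside $\widehat{\mathfrak{age}}(1)_{(d_1,d_2,d_3)}$ ``take the vector outside the annihilator-defined cone''---is not an argument: an element such as $h_{-1}v$ need not leave $V$ in any usable sense, and extracting $U(\widehat{\mathfrak{age}}(1)_{(d_1,d_2,d_3)})v\supset V$ from $U(\widehat{\mathfrak{age}}(1))v=S$ is precisely the hard content you have not supplied. The paper avoids this completely by reversing the order. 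It notes that the degree-lowering computations of Lemma~\ref{lem1} nowhere use simplicity of $V$; hence for any nonzero $w\in\ker\pi$ one can still manufacture a nonzero element of $\ker\pi\cap V$, contradicting $\ker\pi\cap V=0$. Thus $\pi$ is an isomorphism without knowing $V$ is simple. Simplicity of $V$ is then read off \emph{a posteriori}: a proper nonzero $\widehat{\mathfrak{age}}(1)_{(d_1,d_2,d_3)}$-submodule $V'\subsetneq V$ would, by PBW, yield a proper nonzero submodule $\mathrm{Ind}(V')\subsetneq \mathrm{Ind}(V)\cong S$, contradicting simplicity of $S$. This inversion of the logical dependence is the key device you are missing.
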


\begin{proof}
Since $r\leq 2l-l^\prime$,
then we can take $x_3,x_4$ such that $l-r\geq x_3-x_4\geq l^\prime-l$ and $x_3\geq1$.
Assume that $p_{l}$ does not act injectively on $N_{l,l^\prime,x_3,x_4}$.
Then $N_{l-1,l^\prime,x_3,x_4}\neq 0$ which contradicts our choice of $l$.
Thus $p_{l}$ acts injectively on $N_{l,l^\prime,x_3,x_4}$.
Let $d_1=l-x_4,d_2=l-x_3,d_3=l-l^\prime,V=N_{l,l^\prime,x_3,x_4}$,
then $d_1+d_3=l-x_4+l-l^\prime\geq l-x_3=d_2$, $l>l-x_3=d_2$,
$l+d_2-d_1=l-x_3+x_4\geq r$,
hence $p_l$ acts injectively on $V$ and $p_{l+i}V=z_{l+d_2-d_1+i-1}V=e_{l-d_3+i}V=q_{l-d_1+i}V=h_{l-d_2+i}V=0$ for all $i\in \mathbb{Z}_+$.
For all $v\in V,i\in\mathbb{Z}_+,j\in\mathbb{N}$, since $d_3-d_2\geq -d_1$,
 \begin{equation*}
\begin{aligned}
&h_{x_3+i}q_{d_3+j}v=[h_{x_3+i},q_{d_3+j}]v=-q_{l-d_2+d_3+i+j}v=0,\\
&p_{l+i}q_{d_3+j}v=[p_{l+i},q_{d_3+j}]v=z_{l+d_3+i+j}v=0,\\
&e_{l^\prime+i}q_{d_3+j}v=[e_{l^\prime+i},q_{d_3+j}]v=p_{l^\prime+d_3+i+j}v=0,\\
&q_{x_4+i}q_{d_3+j}v=q_{d_3+j}q_{x_4+i}v=0,
\end{aligned}
\end{equation*}
then $q_{d_3+j}v\in V$, similarly we have $h_{j}v,e_{d_1+j}v,p_{d_2+j}v\in V$,
hence $V$ is a $\widehat{\mathfrak{age}}(1)_{(d_1,d_2,d_3)}$-module satisfying the conditions in Theorem \ref{thm1} except the simplicity.

There exists a canonical $\widehat{\mathfrak{age}}(1)$-module epimorphism
\begin{equation*}
\pi :{\rm Ind}_{\widehat{\mathfrak{age}}(1)_{(d_1,d_2,d_3)}}^{\widehat{\mathfrak{age}}(1)}(V) \rightarrow S,\;\;\pi (1\otimes v) =v\;\;\mbox{for any}\; v\in V.
\end{equation*}
Let $K=\mbox{ker}(\pi)$ be the kernel of $\pi$.
 It is clear that $K\cap V=0$.
 Note that $K$ is an $\widehat{\mathfrak{age}}(1)$-submodule of ${\rm Ind}_{\widehat{\mathfrak{age}}(1)_{(d_1,d_2,d_3)}}^{\widehat{\mathfrak{age}}(1)}(V)$
    and hence is stable under the actions of $e_i,p_i,h_i$ and $q_{i}$ for all $i\in\mathbb{Z}$.
 If $K\ne 0$, for any nonzero vector $v\in K$
  such that $\mbox{deg}(v)=({\bf i,j,m,n})$,
   by the proof of Lemmas \ref{lem1} (the proof does not use the simplicity of $V$),
we can obtain a vector $u\in V$,
 which is a contradiction.
 Thus we have $K=0$, that is, $S\cong {\rm Ind}_{\widehat{\mathfrak{age}}(1)_{(d_1,d_2,d_3)}}^{\widehat{\mathfrak{age}}(1)}(V) $.
 By the property of induced modules, we know $V$ is a simple $\widehat{\mathfrak{age}}(1)_{(d_1,d_2,d_3)}$-module.
\end{proof}

\subsection{Examples}
Now we give an example
of simple restricted $\widehat{\mathfrak{age}}(1)$-module constructed from the standard Whittaker $\widehat{\mathfrak{age}}(1)$-module.
First, we consider the subalgebra
\begin{equation}
\widehat{\mathfrak{age}}(1)_{+}=\sum_{i\in\mathbb{N}}(\mathbb{C}h_{i+1}\oplus\mathbb{C}e_{i}\oplus\mathbb{C}p_{i}\oplus\mathbb{C}q_{1+i})\oplus\sum_{i\in\mathbb{Z}}\mathbb{C}z_i\oplus\mathbb{C}k.
\end{equation}
\begin{defi}
{\em For a Lie algebra $L$, let $L_0:=L$ and $L_i:=[L_{i-1},L]$ for $i\in\mathbb{Z}_+$.
We say that $L$ is {\em quasi-nilpotent} if $\bigcap_{i=0}^{\infty}L_i=0$.}
\end{defi}

\begin{defi}
{\em Let $L$ be a Lie algebra and $W$ be an $L$-module. We say that $L$ acts {\em locally nilpotently} on $W$ if for any $w\in W$
  there exists $n\in \mathbb{Z}_+$ such that $\alpha_1\alpha_2\cdots\alpha_n(w)=0$ for all $\alpha_1,\alpha_2,\cdots,\alpha_n\in L$.}
\end{defi}

It is clear that $\widehat{\mathfrak{age}}(1)_{+}$ is quasi-nilpotent and
the action of $\widehat{\mathfrak{age}}(1)_{+}$ on the adjoint $\widehat{\mathfrak{age}}(1)_{+}$-module $\widehat{\mathfrak{age}}(1)/\widehat{\mathfrak{age}}(1)_{+}$ is locally nilpotent.
Then $(\widehat{\mathfrak{age}}(1),\widehat{\mathfrak{age}}(1)_{+})$ is a Whittaker pair (see \cite{BM}), hence we can define the {\em standard Whittaker module of type $\varphi$ associated to $(\widehat{\mathfrak{age}}(1),\widehat{\mathfrak{age}}(1)_{+})$} (also known as universal Whittaker module) by
$$W_{(\varphi,\widehat{\mathfrak{age}}(1)_{+})}=U(\widehat{\mathfrak{age}}(1))\otimes_{U(\widehat{\mathfrak{age}}(1)_{+})}\mathbb{C}w,$$
where $\varphi:\widehat{\mathfrak{age}}(1)_{+}\rightarrow\mathbb{C}$ is a Lie algebra homomorphism and $\mathbb{C}w$ is a $\widehat{\mathfrak{age}}(1)_{+}$-module
with $\alpha.w=\varphi(\alpha)w$ for any $\alpha\in \widehat{\mathfrak{age}}(1)_{+}$.
Since $\varphi:\widehat{\mathfrak{age}}(1)_{+}\rightarrow\mathbb{C}$ is a Lie algebra homomorphism,
then $\varphi(e_i)=\varphi(p_i)=\varphi(q_{i+1})=\varphi(z_i)=0$ for all $i\in\mathbb{Z}_+$.

Let $d_1=-1,d_2=-1,d_3=1$, we consider the subalgebra
$$\widehat{\mathfrak{age}}(1)_{(-1,-1,1)}=\sum_{i\in\mathbb{N}}(\mathbb{C}h_i\oplus\mathbb{C}e_{-1+i}\oplus\mathbb{C}p_{-1+i}\oplus\mathbb{C}q_{1+i})\oplus\sum_{i\in\mathbb{Z}}\mathbb{C}z_i\oplus\mathbb{C}k$$
and $\widehat{\mathfrak{age}}(1)_{(-1,-1,1)}$-module
$M=U(\widehat{\mathfrak{age}}(1)_{(-1,-1,1)})\otimes_{U(\widehat{\mathfrak{age}}(1)_{+})}\mathbb{C}w.$
It is clear that $$W_{(\varphi,\widehat{\mathfrak{age}}(1)_{+})}\cong U(\widehat{\mathfrak{age}}(1))\otimes_{U(\widehat{\mathfrak{age}}(1)_{(-1,-1,1)})}M.$$

\begin{prop}\label{exam}
If $\varphi(p)\ne0$ and $\varphi(z)=\varphi(e)=\varphi(h_i)=0,i\in\mathbb{Z}_{\geq2}$. Then $\widehat{\mathfrak{age}}(1)_{(-1,-1,1)}$-module
$M$ is simple and satisfies the conditions of Theorem \ref{thm1}.
\end{prop}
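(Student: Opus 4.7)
My plan is to verify the two claims separately: (I) that $M$ satisfies the hypotheses of Theorem \ref{thm1} with the choice $l = 0$, and (II) that $M$ is a simple $\widehat{\mathfrak{age}}(1)_{(-1,-1,1)}$-module. Throughout I will use the PBW basis $\{p_{-1}^c e_{-1}^b h^a w : a,b,c \in \mathbb{N}\}$ of $M$. Combining the stated hypotheses with the automatic vanishing of $\varphi$ on $[\widehat{\mathfrak{age}}(1)_+,\widehat{\mathfrak{age}}(1)_+]$, one obtains $\varphi(p_j) = \varphi(e_j) = \varphi(z_j) = 0$ for $j\geq 1$, $\varphi(q_j) = \varphi(h_j) = 0$ for $j\geq 2$, $\varphi(z_0) = \varphi(e_0) = 0$, and $\varphi(p_0) = \varphi(p) \neq 0$.

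For (I), the choice $l=0$ is permissible since $l > d_2 = -1$, and the conditions of Theorem \ref{thm1} with $(d_1,d_2,d_3)=(-1,-1,1)$ unpack to the vanishings $p_jM=0$ $(j\geq 1)$, $z_jM=0$ $(j\geq 0)$, $e_jM=0$ $(j\geq 0)$, $q_jM=0$ $(j\geq 2)$, $h_jM=0$ $(j\geq 2)$, together with injectivity of $p_0$. Each vanishing follows from a direct commutator calculation: one moves the acting element rightward past $p_{-1}, e_{-1}, h$ in turn (using brackets like $[p_j,h]=-p_j$, $[q_j,e_{-1}]=-p_{j-1}$, $[h_j,p_{-1}]=p_{j-1}$), and every term either reduces to an already-handled operator or ends on $w$ with a $\varphi$-value in the vanishing list. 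Injectivity of $p_0$ is immediate: $[p_0,e_{-1}]=[p_0,p_{-1}]=0$ and $[p_0,h]=-p_0$ give $p_0(p_{-1}^c e_{-1}^b h^a w) = \varphi(p)\,p_{-1}^c e_{-1}^b (h-1)^a w$, and since $\varphi(p) \neq 0$ while $h\mapsto h-1$ is a bijection on $\mathbb{C}[h]$, the map $p_0$ has trivial kernel.

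For (II), I identify $M$ with the polynomial algebra $\mathbb{C}[h, e_{-1}, p_{-1}]$ via $w \leftrightarrow 1$. Using the brackets $[q_1,h]=q_1$, $[q_1,e_{-1}]=-p_0$, $[q_1,p_{-1}]=z_0$ (which acts as $0$), $[h_1,h]=0$, $[h_1,e_{-1}]=2e_0$ (which acts as $0$), $[h_1,p_{-1}]=p_0$ and $[p_0,h]=-p_0$, together with the $\varphi$-values on $w$, the actions of $p_0, q_1, h_1$ on $\mathbb{C}[h, e_{-1}, p_{-1}]$ work out to
\begin{equation*}
p_0 = \varphi(p)\,\sigma_-,\ \ q_1 = \varphi(q_1)\,\sigma_+ - \varphi(p)\,\sigma_-\partial_{e_{-1}},\ \ h_1 = \varphi(h_1)\,\mathrm{id} + \varphi(p)\,\sigma_-\partial_{p_{-1}},
\end{equation*}
where $\sigma_\pm$ denotes the $h$-shift $h\mapsto h\pm 1$. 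Composition gives $p_0 q_1 = \varphi(p)\varphi(q_1)\,\mathrm{id} - \varphi(p)^2\,\sigma_-^2 \partial_{e_{-1}}$; thanks to $\varphi(p) \neq 0$, the operators $\sigma_-^2\partial_{e_{-1}} = \varphi(q_1)/\varphi(p)\,\mathrm{id} - p_0 q_1/\varphi(p)^2$ and $\sigma_-\partial_{p_{-1}} = (h_1 - \varphi(h_1)\,\mathrm{id})/\varphi(p)$ are both realized by explicit elements of $U(\widehat{\mathfrak{age}}(1)_{(-1,-1,1)})$ acting on $M$.

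Given any nonzero submodule $N \subseteq M$ and $0\ne f\in N$, I write $f=\sum_{b,c}f_{b,c}(h)\,e_{-1}^b p_{-1}^c$, then pick $B$ maximal among $\{b : \exists c,\ f_{b,c}\ne 0\}$ and $C$ maximal among $\{c : f_{B,c}\ne 0\}$. Applying $(\sigma_-^2\partial_{e_{-1}})^B(\sigma_-\partial_{p_{-1}})^C = \sigma_-^{2B+C}\partial_{e_{-1}}^B\partial_{p_{-1}}^C$ to $f$ kills every contribution except the one from $(b,c)=(B,C)$ and produces the nonzero polynomial $B!\,C!\,f_{B,C}(h-2B-C)\cdot w \in N$. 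Finally, $(p_0 - \varphi(p)\,\mathrm{id})$ acts on $\mathbb{C}[h]\cdot w$ as $\varphi(p)$ times the finite difference $g(h)\mapsto g(h-1)-g(h)$, strictly lowering polynomial degree, so $(p_0 - \varphi(p))^{\deg f_{B,C}}$ applied to our polynomial yields a nonzero scalar multiple of $w$. Hence $w\in N$ and $N = M$. The key technical step is the operator-identity extraction of the "differential" pieces $\sigma_-^2\partial_{e_{-1}}$ and $\sigma_-\partial_{p_{-1}}$, whose realization inside the enveloping algebra rests squarely on the hypothesis $\varphi(p)\neq 0$.
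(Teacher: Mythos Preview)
Your proof is correct and follows essentially the same strategy as the paper: both arguments establish simplicity by a degree-reduction scheme on the PBW basis $p_{-1}^{c}e_{-1}^{b}h^{a}w$, using the operators $p_0,\,q_1,\,h_1$ (suitably shifted by their $\varphi$-values) to successively lower the exponents, with the hypothesis $\varphi(p)\neq 0$ doing all the work. The only cosmetic difference is that the paper reduces in the order $h\to e_{-1}\to p_{-1}$ via $(p-\varphi(p)),\,(q_1-\varphi(q_1)),\,(h_1-\varphi(h_1))$ and a reverse-lexicographic ordering, whereas you first extract the clean derivative operators $\sigma_-^2\partial_{e_{-1}}$ and $\sigma_-\partial_{p_{-1}}$ from the composite $p_0q_1$ and from $h_1-\varphi(h_1)$, and only at the end use the finite-difference action of $p_0-\varphi(p)$ on $\mathbb{C}[h]$; both routes are equivalent.
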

\begin{proof}
By the PBW Theorem, for any $v\in M\setminus\mathbb{C}w$, we can write $v$ in the form of the following finite sum
\begin{equation}
\sum_{j_1,j_2,j_3\in\mathbb{N}}a_{(j_3,j_2,j_1)}p_{-1}^{j_3}e_{-1}^{j_2}h^{j_1}w,
\end{equation}
where $a_{(j_3,j_2,j_1)}\in\mathbb{C}$.
Let $\mbox{supp}(v)$ be the set of all $(j_3,j_2,j_1)\in\mathbb{N}^3$ with $a_{(j_3,j_2,j_1)}\ne0$
and $\mbox{deg}(v)=(i_3,i_2,i_1)$ be the maximal element of $\mbox{supp}(v)$ with respect to the reverse lexicographical order.
The following can be directly checked.
\begin{itemize}
\item[(1)] If $i_1\ne0$, since $\varphi(p)\ne0$, we have $\mbox{deg}((p-\varphi(p))v)=(i_3,i_2,i_1-1)$.
\item[(2)] If $i_1=0,i_2\ne0$, since $\varphi(p)\ne0$ and $\varphi(z)=0$, we have $\mbox{deg}((q_1-\varphi(q_1))v)=(i_3,i_2-1,0)$.
\item[(3)] If $i_1=i_2=0,i_3\ne0$, since $\varphi(p)\ne0$, we have $\mbox{deg}((h_1-\varphi(h_1))v)=(i_3-1,0,0)$.
\end{itemize}
These show that $M$ is a simple $\widehat{\mathfrak{age}}(1)_{(-1,-1,1)}$-module.
The remaining parts follow from direct computation.
\end{proof}

If $\varphi(p)\ne0$ and $\varphi(z)=\varphi(e)=\varphi(h_i)=0,i\in\mathbb{Z}_{\geq2}$,
by Proposition \ref{exam} and Theorem \ref{thm1}, $W_{(\varphi,\widehat{\mathfrak{age}}(1)_{+})}$ is a simple restricted $\widehat{\mathfrak{age}}(1)$-module.

\end{document}